%%%%%%%%%%%%%%%%%%          gtlatex.tem       %%%%%%%%%%%%%%%%%%
%
%   Template for articles written in LaTeX for publication in
%   G&T, G&TM and A&GT.  This template must be used with latex2e.
%   If you use BiBTeX then you can collect the bibliography style
%   file  gtart.bst  from
%       https://msp.org/gtp/macros/gtart.bst
%   instructions for using gtpart.cls are given in gtpartdoc.pdf,
%   available at
%       https://msp.org/gtp/macros/gtpartdoc.pdf
%
\documentclass[microtype]{gtpart}     % Basic GT/GTM/AGT style
%
%   The microtype option considerably improves document layout and
%   will make your article more closely approximate the final
%   published version. (This option requires the use of pdflatex;
%   if you use latex instead, you can just remove the option.)
%
%   Uncomment one of the next three lines to obtain a full "mock-up"
%   of a published article:
%   A&GT:  \agtart     G&T:  \gtart   G&TM:  \gtmonart
%
%   NOTE:  Please do not place your article in a public place (eg
%          on the arXiv) in "mock-up" form unless it has been accepted
%          for publication in the relevant journal.
%
%\gtart
%\agtart
%\gtmonart
%
%   Add necessary packages here.  Note that amsthm, amssymb and
%   amsmath are already loaded, so there is no need to add any
%   of these.  Examples:
%
%\usepackage{pinlabel}  %%% the recommended graphics+labelling package
\usepackage{graphicx}  %%% the recommended graphics package
%\usepackage[all]{xy}
%\usepackage{amscd}

%%% Start of metadata

\title{$\ \ $ Structure and realizability for rational maps}

%  First author
%
\author{Zhiqiang Wei}
%\givenname{}
%\surname{}
\address{School of Mathematics and Statistics, Henan University, Kaifeng, China \newline
Center for Applied Mathematics of Henan Province, Henan University, Zhengzhou, China}
\email{weizhiqiang15@mails.ucas.edu.cn  ~~or~~10100123@vip.henu.edu.cn}

%\urladdr{}

%  Second author (uncomment if necessary)
%
%\author{}
%\givenname{}
%\surname{}
%\address{}
%\email{}
%\urladdr{}
%
%  (Add a similar block for other authors)
%
%   Title and author both have running head options:
%
%   \title[Running head title]{Main title}
%   \author[Running head author]{Author}
%
% give a separate \keyword and \subject line for each keyword/phrase or
% subject class eg \keyword{framed link} \subject{primary}{msc2010}{57M25}

\keyword{Branched cover, Hurwitz existence problem, Rational map}
\subject{primary}{msc2020}{57M12}
%\subject{secondary}{msc2010}{}

%  Fill in the reference number if your article is stored on the arXiv
%  eg \arxivreference{math.GT/0512347} or \arxivreference{1203.4984}.
%  The newer style reference numbers (with a period) do not require the
%  prefix arxiv: or math.NT/ or anything else. Just the reference
%  number is sufficient.

\arxivreference{}

%%% End of metadata

%%% Start of user-defined macros %%%
%
%   Theorem-type environments.  There are two predefined styles :
%
%   \theoremstyle{plain} : for theorems, corollaries etc with heading
%   bold and left justified, optional note bracketed in roman type
%   and statement in slanted type.  This is the default style.
%
%   \theoremstyle{definition} : (alias remark)  for definitions, remarks
%   etc with heading bold and left justified, optional note as before but
%   with statement in roman type.
%
%   Some sample  \newtheorem's  (delete these unless you need
%   them and insert your own):
%
\newtheorem{theorem}{Theorem}[section]    % Standard theorem environment
\newtheorem{lem}[theorem]{Lemma}          % Lemma environment with numbering
%                                     % consecutive to theorems
      % A special unnumbered lemma.
%
\theoremstyle{definition}
\newtheorem{defn}[theorem]{Definition}    % Definition environment with
%                                     % numbering consecutive to theorems
\newtheorem{remark}{Remark}             % Unnumbered environment for remarks.
%

%   Type your macros (\newcommand's etc) below.
%
\newtheorem{conjecture}{Conjecture}[section]
\newtheorem{corollary}{Corollary}[section]
\newtheorem{prop}{Proposition}[section]
\newtheorem{ex}{Example}[section]
%%% End of user-defined macros %%%

\begin{document}

\begin{abstract}    % type your abstract below
We establish a structure theorem for rational maps $f:\overline{\mathbb{C}}\to\overline{\mathbb{C}}$: the pullback metric
$f^{*}{\rm d}s_{0}^{2}$ of the standard metric ${\rm d}s_{0}^{2}$ admits a canonical decomposition into finitely many footballs---Riemann
spheres with two antipodal conical singularities of equal angle---by cutting
along a finite set of geodesics. This geometric decomposition provides a new
framework for the Hurwitz existence problem. As an application, we prove that a collection $\mathcal{D}$ of $k$ nontrivial partitions of a positive integer $d$ satisfying the Riemann--Hurwitz condition is realizable  as the branch datum of a rational map whenever $k>l+1$, where $l$ is the minimum partition length. This unifies the classical
results of Thom ($l = 1$), Pakovich ($l = 2$) and Bara\'{n}ski ($k\geq d$), and confirms a conjecture of
Zheng in an important special case.
\end{abstract}

\maketitle

%%%%%%%%%%%%%%%%%%%%   Start of main body of article

\section{Introduction}

The problem of determining whether a given collection of nontrivial partitions of a positive integer can be realized by a branched covering between Riemann surfaces has a rich history dating back to the seminal work of Hurwitz~\cite{Hur91}. This problem, now known as the \emph{Hurwitz existence problem}, lies at the intersection of complex analysis, topology, and algebraic geometry, with profound connections to dessins d'enfants, Teichm\"uller theory, and the study of spherical conic metrics.

Let us recall the classical formulation. Let $M$ and $N$ be compact Riemann surfaces with Euler characteristic $\chi(M)$ and $\chi(N)$ respectively, and let $f: M \to N$ be a branched covering of degree $d$. Denote the set of branch points of $f$ by $B_f$, then for each  point $x \in B_f$, the local behavior of $f$ is described by a partition $\pi(x) = [\alpha_1,\ldots,\alpha_r]$ of $d$, recording the local degrees at the preimages of $x$.  The collection $\mathcal{D} = \{\pi(x) : x \in B_f\}$ is called the \emph{branch datum} of $f$. The Riemann--Hurwitz formula imposes a natural necessary condition:
\[
\nu(\mathcal{D}) = \sum_{\pi \in \mathcal{D}} (d - |\pi|) = d \cdot \chi(N) - \chi(M),
\]
where $|\pi|$ denotes the length of the partition $\pi$. A collection $\mathcal{D}$ satisfying this condition is called a \emph{candidate branch datum}. The Hurwitz existence problem asks: given $M$, $N$, and a candidate branch datum $\mathcal{D}$, does there exist a branched covering $f: M \to N$ with $\mathcal{D}$ as its  branch datum?

When the target surface $N$ is not the $2$-sphere, the natural necessary condition is also sufficient~\cite{EKS84, EZ78, Hus62}. However, the case $N = S^2$ is substantially more subtle. Despite intensive study over the past century, a complete characterization of realizable candidate branch data remains elusive; data that satisfy the Riemann--Hurwitz formula but cannot be realized are termed \emph{exceptional}, and their classification is an open problem~\cite{FP24, WWX25-1, Zhe06, Zhu19}.

Significant progress has been made in recent decades. Thom~\cite{Th65} and Edmonds--Kulkarni--Stong~\cite{EKS84} proved that if $\mathcal{D}$ contains a partition of length one, it is always realizable. The case where $\mathcal{D}$ contains a partition of length two was subsequently resolved through the combined efforts of Edmonds--Kulkarni--Stong~\cite{EKS84}, Pervova--Petronio~\cite{PP06}, Pakovich~\cite{Pak09}, and Baroni-Petronio~\cite{BP24}, with the latter providing a complete solution. The case where the number of partitions of $\mathcal{D}$ $\geq d$ was solved by Bara\'{n}ski~\cite{Bar01}. These results suggest a general principle: the realizability of a candidate branch datum is intimately related to the minimal length among its partitions and the number of branch points.

Building on computational evidence, Zheng~\cite{Zhe06} proposed a conjecture that captures this principle:

\begin{conjecture}\cite{Zhe06}\label{conj}
Let $d = d' d''$, where $d'$ is the minimal prime factor of $d$ and $d'' > 2$ (i.e., $d$ is neither prime nor equal to $4$). Then a candidate branch datum $\mathcal{D}$ with $k$ partitions is realizable provided that $k > d'' + 1$. Moreover, the bound $k > d'' + 1$ is sharp.
\end{conjecture}

For more details about the Hurwitz existence problem, we refer to \cite{Bar01,Bo82,Ger87,Hus62,KZ96,PP09,PP12,PP06,PP08,PP07,P20,SX20}. In particular, \cite{PP07, P20} provide comprehensive reviews of available results and techniques.

This paper addresses the Hurwitz existence problem from a novel geometric perspective. By identifying the $2$-sphere $S^2$ with the Riemann sphere $\overline{\mathbb{C}}$, Stoilow's theorem~\cite{RP19,S28} allows us to treat any orientation-preserving branched covering $f: \overline{\mathbb{C}} \to \overline{\mathbb{C}}$ as a rational map. Endowing the target sphere with the standard metric of constant curvature $1$, denoted by $\mathrm{d}s_0^2$, we consider its pullback $f^*\mathrm{d}s_0^2$ on the source sphere. This pullback metric is a smooth Riemannian metric everywhere except at the critical points of $f$, where it exhibits conical singularities. It is, by construction, a metric of constant curvature $1$ on the complement of its singularities. Metrics of this type, known as \emph{spherical conic metrics}, have been the subject of intense investigation in recent years (e.g. \cite{BD13,DFA11,Chen15,Er21,Er23,Hei62,LT92,MZ20,MZ22,McO88,MD16,MD19,EP05,HP98,Tr89,Tr91,WZ00,WWX26}), and our work leverages this rich theory to gain new insights into the classical problem of realizing branched coverings.

Our first main result reveals that this metric admits a remarkably simple and canonical decomposition:

\begin{theorem}[Structure theorem for rational maps]\label{Thm1}
Let $f: \overline{\mathbb{C}} \to \overline{\mathbb{C}}$ be a rational map with degree $\geq 2$, and let $\mathrm{d}s_0^2$ be the constant curvature $1$ metric on the target. Then the pullback metric $f^*\mathrm{d}s_0^2$ is a constant curvature $1$ metric with finitely many conical singularities on the source. Moreover, the space $(\overline{\mathbb{C}}, f^*\mathrm{d}s_0^2)$ admits a canonical decomposition: by cutting along a finite set of geodesics connecting the poles, zeros and the critical points of $f$, it can be partitioned into finitely many pieces, each isometric to a \emph{football}---a $2$-sphere with two conical singularities at antipodal points of equal angle.
\end{theorem}

This structure theorem is motivated by the recent work of Wu, Xu, and the author~\cite{WWX26} on reducible spherical metrics, and provides a powerful geometric tool for constructing rational maps with prescribed branch data. By assembling footballs along geodesic boundaries, rational maps can be built in a combinatorial fashion, thereby reducing the Hurwitz existence problem to a gluing construction.

As an application of this geometric framework, we prove the following existence result:

\begin{theorem}\label{Thm2}
Let $k \geq 3$ and $d \geq 3$ be integers. Let $\pi_1, \ldots, \pi_k$ be $k$ nontrivial partitions of $d$ such that
\[
\sum_{i=1}^k (d - |\pi_i|) = 2d - 2,
\]
and assume $|\pi_i| \geq |\pi_k|$ for all $i$. If $k > |\pi_k| + 1$, then the collection $\mathcal{D} = \{\pi_1, \ldots, \pi_k\}$ is realizable.
\end{theorem}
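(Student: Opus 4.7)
The plan is to use Theorem~\ref{Thm1} constructively: build a spherical conic metric on $\overline{\mathbb{C}}$ with cone angles matching $\mathcal{D}$ by assembling footballs along geodesics, then invoke the converse direction of Theorem~\ref{Thm1} (namely, the fact that the developing map of a spherical conic metric admitting a football decomposition extends to a global rational map; this is implicit in the setting of \cite{WWX25-2}) to extract the desired rational map realizing $\mathcal{D}$. I expect to proceed by induction on $\ell := |\pi_k|$. The base case $\ell = 1$ is immediate, since $\pi_k = [d]$ contains a single $d$-cycle and Boccara's theorem \cite{Bo82} applies.

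For the inductive step, assume $\ell \geq 2$ and $k \geq \ell + 2$, and write $\pi_k = [a_1, \ldots, a_\ell]$. I would produce a modified candidate datum $\mathcal{D}'$ of minimum length $\ell - 1$ by merging $a_1, a_2$ in $\pi_k$ into $a_1 + a_2$ and simultaneously splitting a part $b \geq 2$ of some other partition $\pi_i$ into $b_1 + b_2$ with $b_1, b_2 \geq 1$, chosen so that the resulting $\pi'_i$ remains nontrivial. A partition is \emph{safely splittable} unless it is the simple transposition $[2, 1, \ldots, 1]$; the hypothesis $k \geq \ell + 2$ provides the pigeonhole slack needed to find a safely splittable $\pi_i$ among the $k - 1$ non-$\pi_k$ partitions (the extreme case in which all of them are simple transpositions is highly constrained by Riemann-Hurwitz and would be dispatched separately, e.g.\ via a direct construction). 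The modified datum $\mathcal{D}'$ then satisfies the hypotheses of the theorem with $\ell$ replaced by $\ell - 1$, so it is realizable by some rational map $f'$ by the inductive hypothesis.

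To lift the realization from $\mathcal{D}'$ back to $\mathcal{D}$, I apply Theorem~\ref{Thm1} to $f'$ and perform a local geodesic cut-and-paste surgery on the football decomposition of $(\overline{\mathbb{C}}, f'^{*}\mathrm{d}s^{2}_{0})$: along a geodesic arc connecting the cone point of angle $2\pi(a_1 + a_2)$ above $q_k$ with the two cone points of angles $2\pi b_1, 2\pi b_2$ above $q_i$, split the former into two cone points of angles $2\pi a_1, 2\pi a_2$ and simultaneously merge the latter two into one of angle $2\pi b$. In the Hurwitz permutation language, this surgery is the simultaneous move $\alpha'_i \mapsto \alpha'_i \tau$, $\alpha'_k \mapsto \tau \alpha'_k$ for a transposition $\tau \in S_d$; after reordering so that $i = k - 1$, the product relation $\alpha_1 \cdots \alpha_k = 1$ is preserved automatically since $\tau^2 = 1$. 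The converse of Theorem~\ref{Thm1} then yields a rational map $f$ realizing $\mathcal{D}$.

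The main obstacle is to guarantee the existence of a compatible surgery arc, or equivalently a compatible transposition $\tau$, that simultaneously delivers the correct cycle types for both $\pi_i$ and $\pi_k$ and preserves the transitivity of the monodromy of the source. Here the hypothesis $k \geq \ell + 2$ is doubly decisive: it supplies the spare partitions needed for the safely splittable choice above, and it leaves $k - 2 \geq \ell$ permutations untouched by the surgery, whose joint action maintains transitivity. Making this pigeonhole explicit, possibly after a preliminary conjugation of the inductive realization of $\mathcal{D}'$ to align the relevant cycles of $\alpha'_k$ and $\alpha'_i$, is the technical heart of the argument.
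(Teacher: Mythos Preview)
Your induction on $\ell = |\pi_k|$ is a different scheme from the paper's, and the step you flag as ``the technical heart'' is in fact a genuine gap that your proposed tools do not close. The transposition $\tau$ you need must simultaneously (i) split a specified $(a_1+a_2)$-cycle $C$ of $\alpha'_k$ into cycles of lengths $a_1$ and $a_2$, forcing $\tau=(x,y)$ with $x,y$ at prescribed relative positions inside $C$; and (ii) merge two specified cycles of $\alpha'_{k-1}$ of lengths $b_1,b_2$, forcing $x$ and $y$ to lie in distinct cycles of $\alpha'_{k-1}$ of those lengths. Your inductive hypothesis is merely that $\mathcal{D}'$ is realizable; it carries no information about how the cycles of $\alpha'_{k-1}$ sit relative to those of $\alpha'_k$, so there is no reason a common $\tau$ exists. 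Simultaneous conjugation of the whole tuple is useless here: it relabels all symbols at once and hence preserves exactly the relative incidence pattern that obstructs you. The pigeonhole slack in $k$ is also irrelevant to this obstruction, which lives in the interaction of only two of the permutations. Finally, your transitivity argument is incorrect: the $k-2$ untouched generators $\alpha_1,\dots,\alpha_{k-2}$ need not be transitive on $\{1,\dots,d\}$ by themselves (they could all be single transpositions supported on a proper subset), and the modified pair $\alpha'_{k-1}\tau,\ \tau\alpha'_k$ only lets you recover $\alpha'_{k-1}\alpha'_k$, not $\alpha'_{k-1}$ and $\alpha'_k$ individually.

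The paper sidesteps this by inducting on $d$ rather than $\ell$ and, crucially, by \emph{strengthening} the inductive hypothesis: it records not just realizability of the degree-$(d-1)$ datum but that the realization has a specific pair of preimage cone points (over $\pi_1$ and $\pi_k$, or over $\pi_1$ and $\pi_2$, according to whether $e_k=0$ or $e_k>0$) joined by a single standard football. Gluing one new football along that distinguished geodesic then raises the degree by $1$, bumps exactly two local multiplicities by $+1$, and appends a $1$ to every other partition; this is a purely local move requiring no alignment of global cycle structures, and Lemmas~\ref{le8.1}--\ref{le8.3} are the combinatorial bookkeeping ensuring the reduced datum still meets the hypotheses. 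If you wish to salvage your scheme, you would likewise need to load the inductive statement with enough structural information about the realization of $\mathcal{D}'$ to guarantee the existence of a compatible $\tau$; bare realizability is not enough.
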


This theorem unifies and extends the classical results of Thom (the case $|\pi_k| = 1$), Pakovich (the case $|\pi_k| = 2$)  and Bara\'{n}ski (the case $k\geq d$). Moreover, it allows us to confirm Zheng's conjecture in an important special case:

\begin{theorem}\label{Thm3}
Suppose $d = d' d''$, where $d'$ is the minimal prime factor of $d$ and $d'' > 2$. Then any candidate branch datum $\mathcal{D}$ that contains a partition of length $d''$, and has $k > d'' + 1$ partitions is realizable. The condition $k > d'' + 1$ is sharp, as demonstrated by an explicit non-realizable example with $k = d'' + 1$.
\end{theorem}

\begin{proof}
Since $\mathcal{D}$ has $k$ elements and $k > d'' + 1$, by Theorem \ref{Thm2}, $\mathcal{D}$ is realizable.

Since
\[
d'' + 1 > d'' = \frac{d}{\gcd(d', \ldots, d')},
\]
it follows from a result in \cite{SX20}  that the collection
\[
\{[d', \ldots, d'], [d', \ldots, d'], [1, \ldots, 1, d'' + 1], \underbrace{[1, \ldots, 1, 2], \ldots, [1, \ldots, 1, 2]}_{d''-2}\}
\]
(with $k = d'' + 1$ and $\nu(\mathcal{D}) = 2d - 2$) is non-realizable. This completes the proof.
\end{proof}

The paper is organized as follows. Section~2 reviews essential background on branched coverings, conical singularities, and football metrics. Section~3 develops the geometric structure theory and proves Theorem~\ref{Thm1}. This section also presents elementary constructions of rational maps with branch datum $\{[2],[2]\}$ via football gluing. Moreover, it provides some explicit examples to illustrate the geometric ideas behind the inductive proof. Section 4 establishes combinatorial properties of candidate branch data and delivers the
proof of Theorem \ref{Thm2}. Section 5 lists some open problems for future investigation.

\section{Preliminary}
In this section, we introduce the notions of a \emph{branched covering}, a \emph{conical singularity} and a \emph{football metric}---or simply a \emph{football}---which is the simplest type of constant Gaussian curvature $1$ metrics with conical singularities and serves as a basic building block in our construction.

\subsection{Branched covering and Hurwitz existence problem}

Let \( M \) and \( N \) be compact Riemann surfaces with Euler characteristics \( \chi(M) \) and \( \chi(N) \). A smooth map \( f: M \to N \) is termed a \emph{branched covering} of degree \( d \) provided that for every \( x \in N \), there exists a partition \( \pi(x) = [\alpha_1, \ldots, \alpha_r] \) of \( d \)---representing the local multiplicities---such that \( f \) is locally modeled by
\[
(j, z) \mapsto z^{\alpha_j}, \quad z \in \mathbb{D}.
\]
The \emph{branch set} \( B_f \subset N \) is the finite set of points \( x \) for which \( \pi(x) \neq [1, \ldots, 1] \), and the \emph{branch datum} is the collection \( \mathcal{D} = \{\pi(x) : x \in B_f\} \).

A fundamental constraint is given by the Riemann--Hurwitz formula:
\begin{equation}\label{RHF}
\nu(\mathcal{D}) = d \cdot \chi(N) - \chi(M).
\end{equation}
Here, the total branching \( \nu(\mathcal{D}) \) is defined in terms of the local degrees. More precisely, if \( B_f = \{x_1, \ldots, x_k\} \) and \( \mathcal{D} = \{[\alpha_{i1}, \ldots, \alpha_{ir_i}]\}_{i=1}^k \), then near each  point \( y_{ij} \) in the preimage of \( x_i \), the map is locally \( z \mapsto z^{\alpha_{ij}} \). Consequently,
\[
\nu(\mathcal{D}) = \sum_{i=1}^{k} \sum_{j=1}^{r_i} (\alpha_{ij} - 1).
\]
Observing that \( \sum\limits_{j=1}^{r_i} \alpha_{ij} = d \), we obtain the equivalent form:
\begin{equation}\label{RHF-1}
\sum_{i=1}^{k} (d - r_i) = d \cdot \chi(N) - \chi(M).
\end{equation}

A pair \((d, \mathcal{D})\) satisfying \eqref{RHF} is called a \emph{candidate branch datum}; by a slight abuse of terminology, we also refer to \(\mathcal{D}\) itself as a candidate branch datum when \(d\) is understood. The \emph{Hurwitz existence problem} asks for the existence of a branched covering \(f: M \to N\) of degree \(d\) with branch datum \(\mathcal{D}\), for given compact Riemann surfaces \(M\) and \(N\) and a candidate branch datum \((d, \mathcal{D})\).

\subsection{Conical singularities and footballs}

\begin{defn}[\cite{WZ00}]
 Let $\mathrm{d}s^2 = e^{2\psi}|\mathrm{d}z|^2$ be a conformal metric on the punctured disk $\mathbb{D} \setminus \{0\}$. The singular point $z = 0$ is called a \emph{conical point} with singular angle $2\pi\alpha$ (where $0 < \alpha \neq 1$) if and only if $\psi$ admits a local representation of the form
\[
\psi(z) = (\alpha - 1)\ln |z| + \rho(z),
\]
where $\rho(z)$ is a smooth function on $\mathbb{D}=\{z\in\mathbb{C}| |z|<1\}$. We say that $\mathrm{d}s^2$ is a conical metric on $\mathbb{D}$.
\end{defn}

A conical metric of constant Gaussian curvature $K = -1$, $0$, or $1$ is called \emph{hyperbolic}, \emph{flat}, or \emph{spherical}, respectively. A fundamental problem in complex analysis and differential geometry is to determine the existence and uniqueness of such a metric associated with a given real divisor
\[
D = \sum_{l=1}^{N} (\alpha_l - 1) P_l,
\]
where $\alpha_l > 0$ and $\alpha_l \neq 1$ for all $l$, on a compact Riemann surface $M$. This problem extends the classical uniformization theorem to surfaces with conical singularities and has been extensively studied. We do not go into details on this topic in the present paper; readers who are interested may find further information in the references cited herein.

\begin{defn}(\cite{Tr89,WWX26})
We call a spherical conical metric on $S^{2}\cong\mathbb{C}\cup \{\infty\}$ a football metric if it has two equal cone angles at antipodal points. This metric can be expressed explicitly as ${\rm d}s^{2}=\frac{4\alpha^{2}|z|^{2(\alpha-1)}}{(1+|z|^{2\alpha})^{2}}|{\rm d}z|^{2}$, and is denoted by $S^{2}_{\{\alpha,\alpha\}}$ in this paper, where $\alpha>0$ is the cone angle parameter.
\end{defn}

\begin{figure}[htbp]
\centering
\includegraphics[width=8cm]{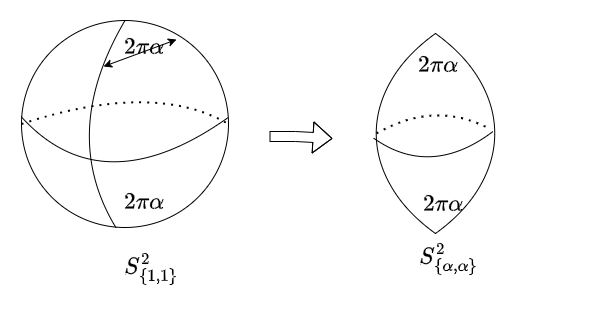}
\caption{An example of a football metric.}
\end{figure}

The geodesic distance between the two singularities of a football metric is $\pi$. In the present paper, for convenience, we also regard the standard smooth metric on $S^{2}$ as a football metric and denote it by $S^{2}_{\{1,1\}}$. Furthermore, a $2$-sphere equipped with a football metric is referred to as a \emph{football}.

For example, on the standard football $S^{2}=S^{2}_{\{1,1\}}$, by taking a bigon with an angle $2\pi\alpha(0<\alpha<1)$ (as shown in Figure 1), we can construct an American football $S^{2}_{\{\alpha,\alpha\}}$.

\section{The structure of rational maps and proof of Theorem \ref{Thm1}}
\subsection{Geometric properties of rational maps}

Let $f: \overline{\mathbb{C}} \to \overline{\mathbb{C}}$ be a rational map of degree $d$ with branch data $\{\pi_1, \ldots, \pi_k\}$, where each
\[
\pi_i = [a_{i1}, \ldots, a_{ir_i}], \quad i = 1, \ldots, k,
\]
is a nontrivial partition of $d$. After a suitable M\"{o}bius transformation, $f$ can be written as
\begin{equation}\label{Exp_R}
f(z) = \frac{\prod\limits_{j=1}^{r_1} (z - z_j)^{a_{1j}}}{\prod\limits_{l=1}^{d} (z - w_l)},
\end{equation}
where $z_1, \ldots, z_{r_1}, w_1, \ldots, w_d$ are distinct complex numbers.

Note that the proof of Theorem \ref{Thm1} depends on the expression of
$f$. However, different rational maps may share the same branch datum. Therefore, when applying Theorem~\ref{Thm1} to establish the realizability of a rational map with prescribed branch datum, the key is to choose an appropriate expression of $f$. For instance, in the proof of Theorem ~\ref{Thm2} and all examples given in section 3.4 we adopt the expression given in \eqref{Exp_R}.

To uncover the hidden geometric structure, we construct two key tools: a real-valued function $\Phi$ that measures the ``logarithmic scale'' of $f$, and a related Killing vector field $\vec{V}$. The integral curves of $\vec{V}$ and the gradient lines of $\Phi$ together form a coordinate grid that naturally demarcates the boundaries of the football pieces in the decomposition. \par

Define a real function $\Phi: \overline{\mathbb{C}} \to \mathbb{R}$ by
\[
\Phi(p) = \frac{4|f(p)|^2}{1 + |f(p)|^2}, \quad p \in \overline{\mathbb{C}}.
\]
Clearly $\Phi$ is smooth on $\overline{\mathbb{C}}$ and the range of $\Phi$  is $[0, 4]$. Moreover:
\begin{itemize}
  \item The zeros of $f$ are minima of $\Phi$,
  \item The poles of $f$ are maxima of $\Phi$,
  \item The critical points of $f$ (excluding zeros) are saddle points of $\Phi$.
\end{itemize}

Now define a meromorphic $1$-form by
\[
\omega = \frac{\mathrm{d}f}{f}.
\]
Then $\omega$ has only simple poles, with residues $a_{1j}$ at $z_j$ for $j = 1, \ldots, r_1$, and $-1$ at each $w_l$ for $l = 1, \ldots, d$. In particular, each point corresponding to an entry $a_{ij} > 1$ in a partition $\pi_i$ with $i = 2, \ldots, k$ is a zero of $\omega$ of order $a_{ij} - 1$, and also a saddle point of $\Phi$.

The following theorem summarizes the geometric structure of $f: \overline{\mathbb{C}} \to \overline{\mathbb{C}}$:

\begin{theorem}\label{Thm3.1}
Let $\mathrm{d}s^{2}_{0}$ be the standard metric on $\overline{\mathbb{C}}$. Then $\mathrm{d}s^{2} = f^*\mathrm{d}s^{2}_{0}$ is a constant Gaussian curvature $1$ metric on $\overline{\mathbb{C}}$ with finitely many conical singularities. These occur at the critical points of $f$, and the conical angle at a critical point of multiplicity $a_{ij} > 1$ is $2\pi a_{ij}$. Moreover, $\Phi$ and $\omega$ satisfy
\begin{equation}\label{E-1}
\frac{4\,\mathrm{d}\Phi}{\Phi(4 - \Phi)} = \omega + \overline{\omega},
\end{equation}
and the metric $\mathrm{d}s^{2}$ is given by
\[
\mathrm{d}s^{2} = \frac{\Phi(4 - \Phi)}{4} \, \omega \, \overline{\omega}.
\]
\end{theorem}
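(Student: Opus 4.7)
The plan is to work in the Fubini–Study model of the target, writing $\mathrm{d}s^2_0 = \frac{4|\mathrm{d}w|^2}{(1+|w|^2)^2}$ in the affine coordinate $w$ on $\overline{\mathbb{C}}$. Pulling back along $f$ immediately gives
\[
f^*\mathrm{d}s^2_0 \;=\; \frac{4|f'(z)|^2\,|\mathrm{d}z|^2}{(1+|f(z)|^2)^2},
\]
which is a smooth conformal metric away from the critical points of $f$ and from the poles of $f$. Since $f$ is holomorphic and $\mathrm{d}s^2_0$ has constant Gaussian curvature $1$, the pullback also has constant curvature $1$ wherever it is nondegenerate; this is the standard conformal-invariance computation $K = -e^{-2\psi}\Delta\psi$ applied to $e^{2\psi} = 4|f'|^2/(1+|f|^2)^2$. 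I would then turn to the local analysis at the three types of distinguished points and verify conicality separately: near a zero $z_j$ of $f$ of order $a_{1j}$, write $f(z) = (z-z_j)^{a_{1j}}h(z)$ with $h$ holomorphic and nonvanishing; near a pole $w_l$, change coordinates on the target by $w \mapsto 1/w$; and near an interior critical point corresponding to an entry $a_{ij} > 1$ with $i \geq 2$, use $f(z) - f(z_0) = c(z-z_0)^{a_{ij}}(1+o(1))$. In each case the pullback factor has the form $|z-z_0|^{2(a_{ij}-1)}\cdot(\text{smooth nonvanishing})$, which by definition produces a conical singularity of angle $2\pi a_{ij}$.

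For the identity \eqref{E-1} I would set $u = |f|^2$ and use $\Phi = 4u/(1+u)$ to compute
\[
\mathrm{d}\Phi \;=\; \frac{4\,\mathrm{d}u}{(1+u)^2} \;=\; \frac{4(\bar f\,\mathrm{d}f + f\,\mathrm{d}\bar f)}{(1+|f|^2)^2}, \qquad
\Phi(4-\Phi) \;=\; \frac{16|f|^2}{(1+|f|^2)^2}.
\]
Dividing gives $\dfrac{4\,\mathrm{d}\Phi}{\Phi(4-\Phi)} = \dfrac{\bar f\,\mathrm{d}f + f\,\mathrm{d}\bar f}{|f|^2} = \dfrac{\mathrm{d}f}{f} + \dfrac{\mathrm{d}\bar f}{\bar f} = \omega + \overline{\omega}$, which is \eqref{E-1}. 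The metric formula then follows from the same bookkeeping: the pullback can be rewritten as
\[
f^*\mathrm{d}s^2_0 \;=\; \frac{4|f|^2}{(1+|f|^2)^2}\cdot\frac{\mathrm{d}f\,\mathrm{d}\bar f}{f\,\bar f} \;=\; \frac{\Phi(4-\Phi)}{4}\,\omega\,\overline{\omega}.
\]

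The only place requiring genuine care is the local model at the poles of $f$, where both $|f|^2 \to \infty$ and $\omega$ has a simple pole, so the formula $\mathrm{d}s^2 = \Phi(4-\Phi)\omega\bar\omega/4$ must be interpreted as an a priori identity on the punctured surface that extends smoothly across the pole once the change of coordinate $w \mapsto 1/w$ is applied; this is the main technical point, although it is routine. One should also verify that the claimed zeros of $\omega$ at the critical points lying over $B_f \setminus \{0,\infty\}$ have the stated orders $a_{ij}-1$, which again follows from the local form $f = f(z_0) + c(z-z_0)^{a_{ij}}(1+o(1))$ and the definition $\omega = \mathrm{d}f/f$. Combining these local verifications with the two global algebraic identities completes the proof.
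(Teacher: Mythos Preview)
Your proposal is correct and is exactly the direct computation the paper has in mind; the paper's own proof consists of the single sentence ``These results follow by direct computation.'' Your explicit verification of the conical-angle formula, of \eqref{E-1} via $u=|f|^2$, and of the metric identity $\mathrm{d}s^2=\tfrac{\Phi(4-\Phi)}{4}\omega\overline{\omega}$ is precisely what is being suppressed there.
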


\begin{proof}
These results follow by direct computation.
\end{proof}

Finally, define a complex vector field $X$ by the condition
\[
\omega(X) = \sqrt{-1}.
\]
Then $X$ is a meromorphic vector field on $\overline{\mathbb{C}}$. Its real part,
\[
\vec{V} = \frac{1}{2}(X + \overline{X}) = \operatorname{Re}(X),
\]
is a real vector field on the sphere.

The following properties can be verified directly:

\begin{prop}\label{Pro-2-2}
Let $\operatorname{Sing}(\vec{V})$ denote the set of singular points of $\vec{V}$. Then
\[
\operatorname{Sing}(\vec{V}) = \{\text{poles and zeros of } \omega\} = \{\text{poles, zeros, and critical points of } f\}.
\]
Moreover, $\vec{V}$ is a Killing vector field on $\overline{\mathbb{C}}\setminus \operatorname{Sing}(\vec{V})$ and satisfies $\vec{V} \perp \nabla \Phi$, where $\nabla \Phi$ denotes the real gradient of $\Phi$.
\end{prop}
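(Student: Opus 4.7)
The plan is to verify each claim by reducing to a local computation in the coordinate $w = \log f$, where the entire structure trivializes.

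First, I would identify $\operatorname{Sing}(\vec{V})$ with the combined zero/pole set of $\omega$. In any local holomorphic coordinate $z$ writing $\omega = g(z)\,dz$, the defining equation $\omega(X) = \sqrt{-1}$ forces $X = (\sqrt{-1}/g(z))\,\partial_z$. A direct check shows that $\vec{V} = \operatorname{Re}(X)$, as a real vector field, vanishes precisely when the complex coefficient of $X$ does, so $\vec{V}$ is regular and nonzero wherever $\omega$ is holomorphic and nonzero, blows up at zeros of $\omega$, and vanishes at poles of $\omega$. The translation into the language of $f$ is then a routine residue calculation on $\omega = df/f$: near a zero or pole of $f$ of order $n$, $\omega$ has a simple pole with residue $\pm n$; at a critical point of $f$ that is neither a zero nor a pole, $\omega$ has a zero of order equal to the local multiplicity minus one. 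Taking the union recovers the stated description.

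Second, to establish the Killing property and the orthogonality $\vec{V}\perp\nabla\Phi$, I would pick any $p\notin\operatorname{Sing}(\vec{V})$. By the previous step, $f$ is finite, nonzero, and has nonvanishing differential at $p$, so $w := \log f$ is a bona fide holomorphic local coordinate. In this coordinate $\omega = dw$, and writing $w = u + \sqrt{-1}\,v$ one finds
\[
X = \sqrt{-1}\,\partial_w, \qquad \vec{V} = \operatorname{Re}(X) = \tfrac{1}{2}\,\partial_v.
\]
Since $\Phi = 4|f|^2/(1+|f|^2)$ and $|f| = e^u$, the function $\Phi$ depends only on $u$, and by Theorem \ref{Thm3.1} the metric becomes
\[
\mathrm{d}s^{2} = \frac{\Phi(u)\bigl(4-\Phi(u)\bigr)}{4}\bigl(du^{2} + dv^{2}\bigr).
\]

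Third, both conclusions can be read off directly from this normal form. The gradient $\nabla\Phi$ points in the $\partial_u$ direction and is therefore orthogonal to $\vec{V} = \tfrac{1}{2}\partial_v$. The metric tensor has no $v$-dependence, so translation along $\partial_v$ is an isometry, whence $\vec{V}$ is Killing on $\overline{\mathbb{C}}\setminus \operatorname{Sing}(\vec{V})$.

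The main obstacle, if any, is only bookkeeping: one must check carefully that the real vector field $\operatorname{Re}(X)$ extracted from the complex vector field $X$ has the asserted zero/pole locus, and that the logarithm coordinate $w = \log f$ genuinely exists on a neighborhood of each regular point (it does, since multivaluedness of $\log f$ only shifts $v$ by constants and does not affect $du^2+dv^2$ or $\tfrac{1}{2}\partial_v$). Both checks are routine once the local picture is set up.
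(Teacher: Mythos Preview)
Your proposal is correct and is precisely the kind of direct verification the paper alludes to; the paper itself offers no argument beyond the sentence ``The following properties can be verified directly.'' Your choice of the local coordinate $w=\log f$ is a clean way to make that verification transparent, since it simultaneously trivializes $\omega$, $\Phi$, $\vec V$, and the metric.
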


Let $\Omega_{p}$ denote the set of all integral curves of $\vec{V}$ that intersect at a point $p$, with cardinality $|\Omega_{p}|$. For any $p \notin \operatorname{Sing}(\vec{V})$, there exists a unique integral curve of $\vec{V}$ passing through $p$, so $\Omega_{p}$ is well-defined and consists of exactly two elements: one entering and one leaving $p$. However, at a singular point $p \in \operatorname{Sing}(\vec{V})$, the notion of an integral curve passing through $p$ becomes ambiguous. To resolve this, we say an integral curve $C$ belongs to $\Omega_{p}$ (for $p \in \operatorname{Sing}(\vec{V})$) if and only if there exists a sequence of points in $C$ converging to $p$. Equivalently, if $C \notin \Omega_{p}$, then there exists a small disk $B$ centered at $p$ such that $C \cap B = \emptyset$.

\begin{prop}\label{Pro-2-3}
In a small neighborhood of any extremum of $\Phi$, every integral curve of $\vec{V}$ is a topological circle enclosing the extremum in its interior.
\end{prop}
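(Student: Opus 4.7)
The plan is to pass to a local complex coordinate centered at the extremum in which $f$ takes a canonical monomial form, and then read off $\Phi$ and $\vec{V}$ explicitly. By \textbf{Theorem \ref{Thm3.1}}, the local minima of $\Phi$ are the zeros of $f$ and the local maxima are the poles of $f$. Near a zero $p$ of multiplicity $m$, I would write $f(z)=(z-p)^{m}h(z)$ with $h$ holomorphic and non-vanishing at $p$, extract a local $m$-th root $h^{1/m}$, and set $\zeta:=(z-p)\,h(z)^{1/m}$; then $\zeta$ is a uniformizer at $p$ in which $f=\zeta^{m}$. Near a simple pole $p$ of $f$, I would simply take $\zeta:=1/f$, which is a uniformizer with $f=1/\zeta$. (Higher-order poles, if present in other normalizations, would be handled analogously by $f=\zeta^{-m'}$.)

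In these coordinates everything becomes completely explicit. At a zero one has
\[
\Phi(\zeta)\;=\;\frac{4|\zeta|^{2m}}{1+|\zeta|^{2m}},
\]
a strictly monotone function of $|\zeta|$, so its level sets inside a small disk $\{|\zeta|<\varepsilon\}$ are exactly the concentric circles $\{|\zeta|=r\}$ with $0<r<\varepsilon$. Moreover $\omega=df/f=m\,d\zeta/\zeta$, so the defining condition $\omega(X)=\sqrt{-1}$ forces $X=(\sqrt{-1}\,\zeta/m)\,\partial_\zeta$. Writing $\zeta=re^{\sqrt{-1}\theta}$, a short polar-coordinate computation yields
\[
\vec{V}\;=\;\frac{1}{2}\bigl(X+\overline{X}\bigr)\;=\;\frac{1}{2m}\,\partial_\theta,
\]
a pure rotation around $\zeta=0$. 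The simple-pole case is identical after the substitution $m\mapsto -1$.

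Consequently the integral curves of $\vec{V}$ in the punctured disk $\{0<|\zeta|<\varepsilon\}$ are precisely the circles $\{|\zeta|=r\}$, each a topological circle enclosing $p$ in its interior, which is exactly the statement of the proposition. I do not expect any serious obstacle beyond careful bookkeeping of the coordinate change; the only subtle point is making sure the local $m$-th root really is available so that $f$ acquires its canonical monomial form. As a self-consistency check, \textbf{Proposition \ref{Pro-2-2}} gives $\vec{V}(\Phi)=d\Phi(\vec{V})=0$, so integral curves of $\vec{V}$ lie on level sets of $\Phi$ in any case; since on a small punctured neighborhood of $p$ the field $\vec{V}$ has no zeros and each level set is a compact embedded circle, the induced flow on each such circle must be periodic and its orbit sweeps out the whole circle.
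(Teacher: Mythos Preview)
Your proof is correct and follows essentially the same route as the paper's: both pass to a local coordinate in which $\omega=\alpha\,d\zeta/\zeta$ (you obtain this by normalizing $f$ to a monomial, the paper by normalizing $\omega$ directly), then compute that $\vec{V}$ is the rotational field $\tfrac{1}{2\alpha}\,\partial_\theta$, whose integral curves are the concentric circles about the extremum. Your added consistency check via \textbf{Proposition~\ref{Pro-2-2}} is a pleasant extra but not needed for the argument.
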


\begin{proof}
Without loss of generality, assume that near an extremum of $\Phi$, we have $\omega = \frac{\alpha}{z} \mathrm{d}z$ with $\alpha=-1$ or $\alpha \in \mathbb{Z}^{+}$. Then $Y = \sqrt{-1} \frac{z}{\alpha} \frac{\partial}{\partial z}$. Writing $z = x + \sqrt{-1}y$, we obtain
\[
\vec{V} = \frac{1}{2\alpha} \left( x \frac{\partial}{\partial y} - y \frac{\partial}{\partial x} \right).
\]
It is clear that every integral curve of $\vec{V}$ is a topological circle enclosing the extremum.
\end{proof}

\begin{prop}\label{Pro-2-4}
The set $\operatorname{Sing}(\vec{V})$ decomposes into two disjoint subsets, $\operatorname{Sing}(\vec{V}) = S_1 \cup S_2$, where:
\begin{enumerate}
\item $S_1 = \{ p \in \operatorname{Sing}(\vec{V}) \mid \Omega_p = \emptyset \}$, and if $p \in S_1$, then $p$ is an extremum of $\Phi$.
\item $S_2 = \{ p \in \operatorname{Sing}(\vec{V}) \mid \Omega_p \neq \emptyset \}$, and if $p \in S_2$, then $p$ is a saddle point of $\Phi$ and the angle of $\mathrm{d}s^2$ at $p$ is $\pi \cdot |\Omega_p|$.
\end{enumerate}
\end{prop}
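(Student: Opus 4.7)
The plan is to classify each singular point of $\vec{V}$ by the local form of $\omega$ at that point. By Proposition~\ref{Pro-2-2}, $\operatorname{Sing}(\vec{V})$ splits into poles of $\omega$ (the zeros and poles of $f$) and zeros of $\omega$ (the critical points of $f$ that are not zeros of $f$); these two cases will produce $S_{1}$ and $S_{2}$ respectively.

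First I would handle the poles of $\omega$. At such a point $p$, Proposition~\ref{Pro-2-3} tells us that every integral curve of $\vec{V}$ in a small neighborhood of $p$ is a topological circle enclosing $p$. Any such circle is a closed loop at positive distance from $p$, so no sequence on it converges to $p$; hence $\Omega_{p} = \emptyset$ and $p \in S_{1}$. That $p$ is an extremum of $\Phi$---a maximum at a pole of $f$, a minimum at a zero of $f$---is already recorded in the setup preceding Theorem~\ref{Thm3.1}.

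Next I would treat a zero of $\omega$ of order $m - 1$ with $m \geq 2$, which corresponds to a critical point of $f$ of multiplicity $m$. The standard normalization of a holomorphic $1$-form near a zero yields a local coordinate $w$ centered at $p$ in which $\omega = w^{m-1}\, \mathrm{d}w = \mathrm{d}(w^{m}/m)$, so $f = C \exp(w^{m}/m)$ locally for some constant $C \neq 0$. Since $\Phi$ is a strictly increasing function of $|f|^{2} = |C|^{2} \exp(2 \operatorname{Re}(w^{m})/m)$, the level set $\{\Phi = \Phi(p)\}$ coincides locally with $\{\operatorname{Re}(w^{m}) = 0\}$, which consists of the $2m$ rays from the origin at angles $\theta = (2j+1)\pi/(2m)$, $j = 0, \ldots, 2m-1$. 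In particular, $p$ is a saddle point of $\Phi$. Using the orthogonality $\vec{V} \perp \nabla \Phi$ of Proposition~\ref{Pro-2-2}, each integral curve of $\vec{V}$ lies in a single level set of $\Phi$; on the punctured level set through $p$, $\vec{V}$ is smooth and non-vanishing, so each of the $2m$ rays minus $p$ is one integral curve and each accumulates at $p$. Conversely, any integral curve accumulating at $p$ must lie in $\{\Phi = \Phi(p)\}$ and hence be one of these rays. Therefore $|\Omega_{p}| = 2m$, and since the conical angle at $p$ equals $2\pi m$ by Theorem~\ref{Thm3.1}, we obtain $\pi \cdot |\Omega_{p}| = 2\pi m$, as required.

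The step I expect to require the most care is the simultaneous matching of the three quantities attached to a zero of $\omega$ of order $m - 1$: the number of local level-set branches of $\Phi$ through $p$, the number of integral curves of $\vec{V}$ accumulating at $p$, and the conical angle $2\pi m$ of $\mathrm{d}s^{2}$. The normalization $\omega = w^{m-1}\, \mathrm{d}w$ reduces all three to the single explicit model $f = C \exp(w^{m}/m)$, after which the three counts fall out together.
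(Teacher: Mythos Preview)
Your argument is correct, and for Case~(1) it coincides with the paper's: both invoke Proposition~\ref{Pro-2-3} to see that near a pole of $\omega$ the integral curves are circles bounded away from $p$, so $\Omega_{p}=\emptyset$.

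For Case~(2) you and the paper both normalize to $\omega = w^{m-1}\,\mathrm{d}w$ near a zero of order $m-1$, but you diverge in how the integral curves are identified. The paper computes $\vec V$ explicitly in polar coordinates, writes down the system $r' = \sin(m\theta)/(2r^{m-1})$, $\theta' = \cos(m\theta)/(2r^{m})$, integrates it, and checks directly that the trajectories with $\theta' \neq 0$ satisfy $r^{-m} = \lambda|\cos(m\theta)|$ and hence stay bounded away from $0$, while the $\theta' = 0$ solutions are exactly the $2m$ radial rays at angles $(2j+1)\pi/(2m)$. You instead use the orthogonality $\vec V \perp \nabla\Phi$ from Proposition~\ref{Pro-2-2} to conclude that integral curves of $\vec V$ are level curves of $\Phi$, then read off the level set $\{\Phi=\Phi(p)\}=\{\operatorname{Re}(w^{m})=0\}$ from the explicit local model $f = C\exp(w^{m}/m)$. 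Your route is shorter and more geometric---no ODE is solved---but it imports Proposition~\ref{Pro-2-2} as an essential ingredient, whereas the paper's ODE computation is self-contained to this proposition and in fact re-derives the level-set picture as a by-product (the integral $r^{-m}=\lambda|\cos(m\theta)|$ is equivalent to $\operatorname{Re}(w^{m})=\mathrm{const}$). Either way the count $|\Omega_{p}|=2m$ and the match with the conical angle $2\pi m$ come out the same.
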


\begin{proof}
Case (1) follows from Proposition \ref{Pro-2-3}. We now prove Case (2). Without loss of generality, near a saddle point $p$ of $\Phi$, choosing a new complex coordinate $z$, we can assume $\omega = z^n \mathrm{d}z$ with $n \in \mathbb{Z}^{+}$, where $z$ is a local complex coordinate such that no other singularities of $\vec{V}$ lie in the neighborhood. Then $Y = \frac{\sqrt{-1}}{z^n} \frac{\partial}{\partial z}$. Writing $z = r e^{\sqrt{-1}\theta}$, we compute:
\begin{align*}
Y &= \frac{\sqrt{-1}}{r^n e^{\sqrt{-1}n\theta}} \left( \frac{\partial r}{\partial z} \frac{\partial}{\partial r} + \frac{\partial \theta}{\partial z} \frac{\partial}{\partial \theta} \right)
= \frac{\sqrt{-1}}{2 r^n e^{\sqrt{-1}(n+1)\theta}} \left( \frac{\partial}{\partial r} - \frac{\sqrt{-1}}{r} \frac{\partial}{\partial \theta} \right) \\
&= \frac{1}{2 r^n} \left[ \sin((n+1)\theta) \frac{\partial}{\partial r} + \frac{\cos((n+1)\theta)}{r} \frac{\partial}{\partial \theta} + \sqrt{-1}(\cdots) \right].
\end{align*}
Hence, the real vector field $\vec{V}$ is given by:
\[
\vec{V} = \frac{\sin((n+1)\theta)}{2 r^n} \frac{\partial}{\partial r} + \frac{\cos((n+1)\theta)}{2 r^{n+1}} \frac{\partial}{\partial \theta}.
\]

Suppose an integral curve of $\vec{V}$ near $p$ is parametrized as:
\[
\begin{cases}
r = r(t), \\
\theta = \theta(t),
\end{cases}
\quad t \in (-\varepsilon, \varepsilon) \setminus \{0\},\ \varepsilon > 0,
\]
with $\lim\limits_{t \to 0} r(t) = 0$. Then the system becomes:
\begin{equation}\label{P-2-E-1}
\begin{cases}
r' = \dfrac{\sin((n+1)\theta)}{2 r^n}, \\[10pt]
\theta' = \dfrac{\cos((n+1)\theta)}{2 r^{n+1}}.
\end{cases}
\end{equation}

If $\theta' \neq 0$, the solution of \eqref{P-2-E-1} satisfies:
\[
r^{-(n+1)} = \lambda |\cos((n+1)\theta)|, \quad \lambda > 0,
\]
i.e.,
\[
r = \frac{1}{\sqrt[n+1]{\lambda |\cos((n+1)\theta)|}} \geq \frac{1}{\sqrt[n+1]{\lambda}},
\]
so such a curve does not reach $p$.

If $\theta' = 0$, the solutions are:
\[
\begin{cases}
r^{n+1} = -\dfrac{n+1}{2} t, \\
\theta = \dfrac{1}{n+1} \left[ \dfrac{\pi}{2} + (2k+1)\pi \right], \quad k = 0, \ldots, n,
\end{cases}
\quad t < 0,
\]
and
\[
\begin{cases}
r^{n+1} = \dfrac{n+1}{2} t, \\
\theta = \dfrac{1}{n+1} \left( \dfrac{\pi}{2} + 2k\pi \right), \quad k = 0, \ldots, n,
\end{cases}
\quad t > 0.
\]
This completes the proof of Case (2).
\end{proof}

\begin{prop}\label{Pro-2-5}
At a saddle point of $\Phi$, the angle between two adjacent integral curves of the gradient field $\nabla \Phi$ is $\pi$.
\end{prop}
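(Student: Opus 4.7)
The plan is to perform a direct local computation in the complex coordinate used in the proof of Proposition~\ref{Pro-2-4}. Near a saddle point $p$ of $\Phi$---which by Proposition~\ref{Pro-2-4} is a zero of $\omega$---choose a local complex coordinate $z$ in which $\omega = z^{n}\,\mathrm{d}z$ for some integer $n\geq 1$, with $p$ corresponding to $z=0$. Writing $z = re^{\sqrt{-1}\theta}$, a direct expansion gives
\[
\mathrm{Re}(\omega) = r^{n}\cos((n+1)\theta)\,\mathrm{d}r - r^{n+1}\sin((n+1)\theta)\,\mathrm{d}\theta,
\]
and then the identity \eqref{E-1} yields
\[
\mathrm{d}\Phi = \frac{\Phi(4-\Phi)}{2}\Bigl[r^{n}\cos((n+1)\theta)\,\mathrm{d}r - r^{n+1}\sin((n+1)\theta)\,\mathrm{d}\theta\Bigr].
\]
Since $p$ is a saddle rather than an extremum, $\Phi(p)\in(0,4)$, so the coefficient $\Phi(4-\Phi)/2$ stays positive in a neighborhood of $p$.

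From this expression the direction of $\nabla\Phi$ at the point $(r,\theta)$ is proportional to $\cos((n+1)\theta)\,\partial_{r} - r^{-1}\sin((n+1)\theta)\,\partial_{\theta}$, and an integral curve of $\nabla\Phi$ reaches $p$ only if it approaches along a purely radial ray. This forces $\sin((n+1)\theta)=0$, which selects exactly the $2(n+1)$ angular directions $\theta_{k}=k\pi/(n+1)$ for $k=0,1,\ldots,2n+1$; the sign of $\cos((n+1)\theta_{k})=(-1)^{k}$ distinguishes outgoing from incoming rays. In particular, adjacent rays are separated by the Euclidean angle $\pi/(n+1)$.

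It remains to translate this Euclidean separation into an angle measured in the pullback metric $\mathrm{d}s^{2} = \frac{\Phi(4-\Phi)}{4}\omega\bar\omega$. Near $p$ the metric equals $C\,|z|^{2n}|\mathrm{d}z|^{2}$ up to higher-order terms, where $C=\Phi(p)(4-\Phi(p))/4>0$. Introducing the substitution $\rho = \tfrac{\sqrt{C}}{n+1}r^{n+1}$, $\phi=(n+1)\theta$, one puts this model into the flat cone form $\mathrm{d}\rho^{2} + \rho^{2}\,\mathrm{d}\phi^{2}$, so that a Euclidean angular separation $\delta\theta$ in the $z$-chart corresponds precisely to a metric angle $(n+1)\,\delta\theta$. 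Applied to adjacent gradient directions, this gives a metric angle of $(n+1)\cdot\pi/(n+1) = \pi$, as asserted.

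The delicate step is the angle conversion at the conical singularity: the conical factor $r^{2n}$ in $\mathrm{d}s^{2}$ must be absorbed via the substitution $\phi=(n+1)\theta$ before one can read off the metric angle, since a naive Euclidean reading would give the (wrong) value $\pi/(n+1)$. A reassuring cross-check comes from the proof of Proposition~\ref{Pro-2-4}: the same local model places the $|\Omega_{p}|=2(n+1)$ integral curves of $\vec{V}$ at $\theta=(\pi/2+k\pi)/(n+1)$, exactly interleaved with the $\theta_{k}$ at Euclidean spacing $\pi/(2(n+1))$---equivalently, metric spacing $\pi/2$---in perfect agreement with $\vec{V}\perp\nabla\Phi$ from Proposition~\ref{Pro-2-2}, so that each pair of adjacent gradient rays is separated by two orthogonal sectors totalling $\pi$.
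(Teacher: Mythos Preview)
Your proof is correct. You compute the gradient directions directly in the local model $\omega=z^{n}\,\mathrm{d}z$, identify the $2(n+1)$ radial rays $\theta_{k}=k\pi/(n+1)$ along which $\nabla\Phi$ reaches $p$, and then carefully convert the Euclidean separation $\pi/(n+1)$ into the metric angle $\pi$ via the cone-point substitution $\phi=(n+1)\theta$. The step you flag as delicate---that the conical weight $r^{2n}$ in $\mathrm{d}s^{2}$ rescales angular separations by $n+1$---is handled correctly.

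The paper takes a shorter, indirect route: it simply invokes Proposition~\ref{Pro-2-4}, which already established that adjacent integral curves of $\vec{V}$ are separated by metric angle $\pi$, and then appeals to $\vec{V}\perp\nabla\Phi$ (Proposition~\ref{Pro-2-2}) to conclude the same for $\nabla\Phi$. In other words, the paper's entire argument is your final ``cross-check'' paragraph. What your approach buys is that the angle computation is self-contained and the cone-angle conversion is made explicit, whereas the paper's two-line proof leaves the reader to reconstruct why orthogonality at regular points forces the limiting gradient rays to bisect the $\vec{V}$-sectors at the singular point. What the paper's approach buys is economy: all the local analysis was already carried out in Proposition~\ref{Pro-2-4}, so there is no need to repeat it for $\nabla\Phi$.
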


\begin{proof}
By the proof of Proposition \ref{Pro-2-4}, the angle between two adjacent integral curves of $\vec{V}$ at a saddle point is $\pi$. Since $\nabla \Phi \perp \vec{V}$ by Proposition \ref{Pro-2-2}, the same holds for the integral curves of $\nabla \Phi$.
\end{proof}

According to Proposition \ref{Pro-2-3}, if $p$ is a local minimum point of $\Phi$, the integral curves of $\vec{V}$ form topologically concentric circles in a small neighborhood of $p$. Moreover, the integral curves of $\nabla \Phi$ are perpendicular to these circles. Let $c(t)~(t\in[0,T])$ be an integral curve of $\vec{V}$. For each $t$, there exists a unique integral curve $C_{t}$ (i.e., geodesic) of $\nabla \Phi$ originating from $p$ and intersecting the point $c(t)$. One observes that $C_{t}$ must reach either a saddle point or a local maximum point of $\Phi$ (refer to Figure 2).

 \begin{figure}[htbp]
\centering
\includegraphics[width=6.5cm]{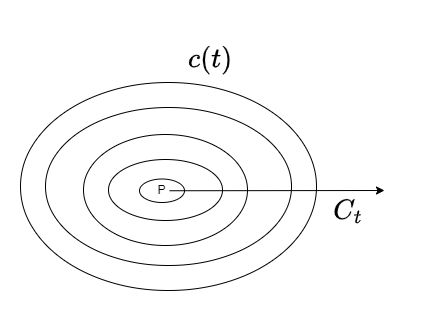}
\caption{Integral curves}
\end{figure}

\begin{prop}\label{Pro-2-6}
Assuming that $C_{t}$ reaches the local maximum point $q$ directly without passing through any saddle point of $\Phi$, the arc length of $C_{t}$ is equal to $\pi$.
\end{prop}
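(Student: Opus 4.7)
The plan is to reduce the computation to a one-dimensional integral in the variable $\Phi$, which evaluates to $\pi$ by an elementary trigonometric substitution. The two key ingredients are the identity \eqref{E-1} and the observation that the restriction of $\omega$ to $C_t$ is a real-valued $1$-form.

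First, I would parametrize $C_t$ by $\Phi$ itself; this is legitimate because, under the hypothesis, $\Phi$ increases strictly monotonically along $C_t$ from $0$ at $p$ (a zero of $f$, hence a minimum of $\Phi$) to $4$ at $q$ (a pole of $f$, hence a maximum of $\Phi$), with no critical points of $\Phi$ in between. Next, I would show that $\omega|_{C_t} = \overline{\omega}|_{C_t}$, which combined with \eqref{E-1} immediately yields
\[
\omega|_{C_t} = \frac{2\,\mathrm{d}\Phi}{\Phi(4-\Phi)}.
\]
The reality claim follows from the decomposition $\omega = \tfrac{1}{2}\,\mathrm{d}\log|f|^{2} + \sqrt{-1}\,\mathrm{d}(\arg f)$: since $\Phi$ depends only on $|f|$, and $f$ is locally biholomorphic away from its critical points, gradient curves of $\Phi$ correspond under $f$ to radial rays in the target $\overline{\mathbb{C}}$, along which $\arg f$ is constant; hence $\mathrm{d}(\arg f)$ vanishes on $C_t$.

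Substituting into the pullback metric formula $\mathrm{d}s^{2} = \tfrac{\Phi(4-\Phi)}{4}\,\omega\,\overline{\omega}$ from \textbf{Theorem \ref{Thm3.1}}, the arc length element along $C_t$ becomes
\[
\mathrm{d}s = \sqrt{\frac{\Phi(4-\Phi)}{4}}\,|\omega| = \frac{\mathrm{d}\Phi}{\sqrt{\Phi(4-\Phi)}},
\]
and integrating from $\Phi = 0$ to $\Phi = 4$ gives
\[
\mathrm{length}(C_t) = \int_{0}^{4} \frac{\mathrm{d}\Phi}{\sqrt{\Phi(4-\Phi)}} = \int_{-\pi/2}^{\pi/2} \mathrm{d}\theta = \pi,
\]
via the substitution $\Phi - 2 = 2\sin\theta$.

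The main subtlety---rather than a genuine obstacle---lies at the two endpoints. Near $p$ and $q$ the form $\omega = \mathrm{d}f/f$ has simple poles, so $|\omega|$ blows up; however, the prefactor $\sqrt{\Phi(4-\Phi)/4}$ vanishes at precisely the right rate to produce the integrable singularity at $\Phi = 0$ and $\Phi = 4$ visible above. One should also note that the hypothesis that $C_t$ avoids saddle points of $\Phi$ guarantees that a single-valued local branch of $\log f$ can be chosen along the interior of $C_t$, so that near a zero $p$ of multiplicity $a$ the curve $C_t$ corresponds to exactly one of the $a$ gradient rays emanating from $p$, and no monodromy issues interfere with the argument.
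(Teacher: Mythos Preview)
Your proposal is correct and takes essentially the same approach as the paper: both parametrize $C_t$ by $\Phi$ (which runs from $0$ to $4$), reduce the arc length to the integral $\int_0^4 \frac{\mathrm{d}\Phi}{\sqrt{\Phi(4-\Phi)}}$, and evaluate it to $\pi$. Your write-up is simply more detailed---you explicitly justify the key relation $\mathrm{d}s = \frac{\mathrm{d}\Phi}{\sqrt{\Phi(4-\Phi)}}$ via the reality of $\omega|_{C_t}$ and address the endpoint behavior---whereas the paper states this relation and the resulting integral without further comment.
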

\begin{proof}~
Suppose $r:[0,l]\rightarrow \overline{\mathbb{C}},s\mapsto r(s)$ is the parameter equation of curve $C_{t}$, where $s$ is the arc length parameter. Then
$$l=\int_{0}^{l}{\rm d}s=\int_{0}^{l}{\rm d}r=\int_{0}^{4}\frac{{\rm d}r}{{\rm d}\Phi}{\rm d}\Phi= \int_{0}^{4}\frac{1}{\sqrt{\Phi(4-\Phi)}}{\rm d}\Phi=\pi.$$
\end{proof}

From Proposition  \ref{Pro-2-6}, we obtain the following corollary.

\begin{corollary}\label{Co-2-1}
For any $t_{1}, t_{2} \in [0, T]$, if both $C_{t_{1}}$ and $C_{t_{2}}$ reach local maxima at points $q_{1}$ and $q_{2}$ respectively without traversing any saddle point of $\Phi$, then the geodesic distance between  $p$ and $q_{1}$ is equal to the geodesic distance between $p$ and $q_{2}$, both being $\pi$.
\end{corollary}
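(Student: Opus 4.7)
The plan is to deduce Corollary~\ref{Co-2-1} as an essentially immediate consequence of Proposition~\ref{Pro-2-6}. The hypothesis of the corollary---that each $C_{t_i}$ reaches the local maximum $q_i$ of $\Phi$ without traversing a saddle point---matches exactly the hypothesis of Proposition~\ref{Pro-2-6}. So I would apply that proposition to $C_{t_1}$ and to $C_{t_2}$ separately, obtaining that each curve has arc length exactly $\pi$ in the pullback metric $\mathrm{d}s^2 = f^*\mathrm{d}s^2_0$. Since each $C_{t_i}$ is an integral curve of $\nabla\Phi$ and thus a geodesic joining $p$ to $q_i$, this already yields the upper bound $d(p,q_i)\le \pi$ for the intrinsic distance.

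If ``geodesic distance'' in the statement refers simply to the length of the specific geodesic $C_{t_i}$, no further work is required. If instead it is read as the intrinsic metric distance $d(p,q_i)=\inf_{\gamma}\mathrm{length}(\gamma)$, I would supply the matching lower bound as follows. By Theorem~\ref{Thm3.1}, the map $f$ is a local isometry from $(\overline{\mathbb{C}}, f^*\mathrm{d}s^2_0)$ to the standard round sphere $(\overline{\mathbb{C}}, \mathrm{d}s^2_0)$ on the complement of its critical points, and therefore distance non-increasing globally. The identity $\Phi = 4|f|^2/(1+|f|^2)$ forces $f(p)=0$ at the local minimum $p$ and $f(q_i)=\infty$ at each local maximum $q_i$. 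Since $0$ and $\infty$ are antipodal on the target, their spherical distance is $\pi$, which gives $d(p,q_i)\ge \pi$. Combining with the upper bound yields $d(p,q_1)=d(p,q_2)=\pi$.

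No step presents a substantive obstacle: the computational content is already carried by Proposition~\ref{Pro-2-6}, and the corollary is essentially the observation that the length $\pi$ produced there is independent of both $t$ and of which local maximum is reached. The only point worth handling with care is the interpretation of ``geodesic distance''; the local-isometry argument above closes the gap in the intrinsic-distance reading, while otherwise the corollary is a direct restatement of two instances of Proposition~\ref{Pro-2-6}.
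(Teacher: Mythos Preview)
Your proposal is correct and follows exactly the paper's approach: the paper simply states that the corollary is obtained from Proposition~\ref{Pro-2-6}, and your argument is precisely two applications of that proposition to $C_{t_1}$ and $C_{t_2}$. Your additional lower-bound argument for the intrinsic-distance reading (via the fact that $f$ is length-preserving on paths, so $d(p,q_i)\ge d_{\mathrm{target}}(0,\infty)=\pi$) is extra rigor not present in the paper, which evidently intends ``geodesic distance'' to mean the length of the geodesic $C_{t_i}$.
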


From Proposition  \ref{Pro-2-6}, we derive the following property.

\begin{prop}
The function $\Phi(r)$ monotonically increases along a geodesic connecting a minimum point to a maximum point of $\Phi$.
\end{prop}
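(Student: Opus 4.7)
The plan is to exploit the explicit form of the metric $\mathrm{d}s^{2} = \frac{\Phi(4-\Phi)}{4}\omega\overline{\omega}$ together with equation (3.1) in order to compute $|\nabla\Phi|$ in closed form, and then to read off monotonicity from the fact that the geodesic in question is, by construction, an integral curve of $\nabla\Phi$. Let $r:[0,\pi]\to\overline{\mathbb{C}}$, $s\mapsto r(s)$, denote the geodesic arc $C_t$ from Proposition 3.6, parametrized by arc length, with $r(0)=p$ a minimum of $\Phi$ and $r(\pi)=q$ a maximum. Since $C_t$ was defined as an integral curve of $\nabla\Phi$, its unit tangent $\dot r(s)$ is the positive unit multiple of $\nabla\Phi$ wherever $\nabla\Phi$ does not vanish.

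The first step is to pin down $|\nabla\Phi|$. Writing $\omega=h(z)\,\mathrm{d}z$ locally, equation (3.1) gives
\[
\mathrm{d}\Phi=\frac{\Phi(4-\Phi)}{2}\,\mathrm{Re}\bigl(h\,\mathrm{d}z\bigr),
\]
while the metric takes the conformal form $\mathrm{d}s^{2}=e^{2\psi}|\mathrm{d}z|^{2}$ with $e^{2\psi}=\frac{\Phi(4-\Phi)}{4}|h|^{2}$. Using the standard formula $|\nabla\Phi|^{2}=e^{-2\psi}(\Phi_{x}^{2}+\Phi_{y}^{2})$ in isothermal coordinates, a one-line computation then yields
\[
|\nabla\Phi|^{2}=\Phi(4-\Phi).
\]
Consequently
\[
\frac{\mathrm{d}\Phi(r(s))}{\mathrm{d}s}=\langle\nabla\Phi,\dot r(s)\rangle=|\nabla\Phi|=\sqrt{\Phi(4-\Phi)},
\]
at every $s$ where $\nabla\Phi\neq 0$.

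The second step is to verify that $\nabla\Phi$ does not vanish on the open arc $s\in(0,\pi)$. The zeros of $\nabla\Phi$ are precisely the critical points of $\Phi$, that is, the extrema and the saddle points. By the hypothesis that $C_t$ reaches $q$ directly without passing through any saddle point of $\Phi$, no saddles lie in the interior; and the only extrema of $\Phi$ on $r$ are its two endpoints $p$ and $q$. Hence $\Phi(r(s))\in(0,4)$ for $s\in(0,\pi)$, so $\sqrt{\Phi(4-\Phi)}>0$ there, and the displayed equation above shows that $\Phi\circ r$ is strictly increasing on $[0,\pi]$.

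The argument is essentially a direct computation; the only subtlety, and the main point to pin down carefully, is the absence of intermediate critical points on the arc, which is guaranteed by the ``directly'' hypothesis inherited from Proposition 3.6. It is worth remarking that the relation $\mathrm{d}\Phi/\mathrm{d}s=\sqrt{\Phi(4-\Phi)}$ obtained here is precisely what was implicitly used in Proposition 3.6 when changing variables from arc length to $\Phi$; the present proposition thus also justifies that earlier change of variables a posteriori.
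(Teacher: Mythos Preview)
Your proof is correct and takes essentially the same route as the paper. The paper does not give a separate argument but simply states that the property is derived from Proposition~3.6, whose change of variables already encodes the relation $\mathrm{d}\Phi/\mathrm{d}s=\sqrt{\Phi(4-\Phi)}$; you make this relation explicit by computing $|\nabla\Phi|^{2}=\Phi(4-\Phi)$ from the metric and equation~(3.1), which is exactly the content behind the paper's terse attribution.
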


\begin{prop}\label{Pro-2-7}
 Let $t_{0}\in (0,T)$ be fixed. Suppose that $C_{t_{0}}$ directly reaches a maximum point $q$ of $\Phi$. Then, there exists $\varepsilon>0$ such that for all $t\in(t_{0}-\varepsilon,t_{0}+\varepsilon)$, $C_{t}$ also reaches the same maximum point $q$ without passing through any saddle point of $\Phi$.
\end{prop}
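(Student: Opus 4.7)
The plan is to exploit the local picture of $\nabla\Phi$ near the maximum $q$ together with continuous dependence of ODE solutions on initial conditions. By \textbf{Proposition \ref{Pro-2-3}} applied to $q$, the integral curves of $\vec{V}$ in a punctured neighborhood of $q$ are topological circles enclosing it, and combining this with the orthogonality $\vec{V}\perp\nabla\Phi$ from \textbf{Proposition \ref{Pro-2-2}} together with the fact that $\Phi$ attains its local maximum at $q$ shows that every gradient line of $\nabla\Phi$ which enters a sufficiently small neighborhood $U$ of $q$ must flow all the way to $q$. I will choose $U$ so small that it contains no critical points of $\Phi$ other than $q$ itself.

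Next, let $p_{0}$ be the point where $C_{t_{0}}$ first crosses $\partial U$, and let $\Gamma\subset C_{t_{0}}$ denote the compact arc from $c(t_{0})$ to $p_{0}$. By hypothesis $C_{t_{0}}$ meets no saddle point of $\Phi$, so $\Gamma$ is a compact set disjoint from the finite critical set $\operatorname{Crit}(\Phi)$. On $\overline{\mathbb{C}}\setminus\operatorname{Crit}(\Phi)$ the normalized gradient field $\nabla\Phi/|\nabla\Phi|^{2}$ is smooth, so by standard continuous dependence, for all $t$ in a sufficiently small interval about $t_{0}$ the forward gradient line from $c(t)$ remains in an arbitrarily thin tubular neighborhood of $\Gamma$, crosses $\partial U$ at a point near $p_{0}$, and then flows to $q$ by the defining property of $U$. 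The backward half of $C_{t}$, from $c(t)$ back to $p$, is handled symmetrically: near $p$ the local picture of \textbf{Proposition \ref{Pro-2-3}} places no saddles there, and outside a small neighborhood of $p$ the backward arc is $C^{0}$-close to the corresponding arc of $C_{t_{0}}$, which is saddle-free. Hence $C_{t}$ joins $p$ to $q$ without hitting any saddle, as required.

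The main difficulty is the non-uniform behavior of the gradient flow near the fixed point $q$: standard continuous dependence breaks down in any neighborhood of a critical point of $\Phi$. This is circumvented by the ``trap'' $U$: the local structure at a local maximum, together with the finite-length statement of \textbf{Proposition \ref{Pro-2-6}}, allows one to verify \emph{independently of the ambient flow} that every gradient line entering $U$ is forced to terminate at $q$. The ODE stability argument is then only required on the compact arc $\Gamma\subset\overline{\mathbb{C}}\setminus\operatorname{Crit}(\Phi)$, where it is entirely routine, and the desired $\varepsilon>0$ is produced by the continuity of $c$ together with the uniform stability estimate along $\Gamma$.
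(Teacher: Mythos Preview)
Your argument is correct. Both your proof and the paper's rest on the same two ingredients---the finiteness of $\operatorname{Crit}(\Phi)$ and the continuous dependence of the gradient trajectories $C_t$ on the initial point $c(t)$---but the paper's version is much terser: it simply asserts that nearby $C_t$ avoid all saddles (because the saddle set is finite) and that ``the endpoints of $C_t$ vary continuously with $t$,'' so by finiteness of the maxima the endpoint must equal $q$ for $t$ near $t_0$. What your write-up adds is an explicit treatment of the degeneracy of the flow at the fixed point $q$: you build a trapping neighborhood $U$ of $q$ from \textbf{Propositions~\ref{Pro-2-2}} and~\textbf{\ref{Pro-2-3}}, and then invoke standard ODE stability only on the compact saddle-free arc $\Gamma\subset\overline{\mathbb{C}}\setminus\operatorname{Crit}(\Phi)$. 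This decomposition is precisely the content hidden behind the paper's unjustified claim that endpoints vary continuously, so your route buys a genuinely rigorous argument at the cost of some length; the paper's route buys brevity but leaves that step to the reader.
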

\begin{proof}~
Since the number of saddle points of $\Phi$ is finite, there exists a small neighborhood $(t_{0}-\varepsilon, t_{0}+\varepsilon)$ with $\varepsilon>0$ centered at $t_{0}$ where each curve $C_{t}$ for $t\in(t_{0}-\varepsilon, t_{0}+\varepsilon)$ does not intersect any saddle point of $\Phi$. The endpoints of $C_{t}$ vary continuously with $t$, and the maximum points of $\Phi$ are limited in number. Thus, there exists $\varepsilon>0$ such that for all $t\in(t_{0}-\varepsilon, t_{0}+\varepsilon)$, $C_{t}$ reaches the same maximum point.
\end{proof}

\subsection{Proof of Theorem \ref{Thm1}}
The proof of Theorem \ref{Thm1} follows from Theorem \ref{Thm3.1},  and Propositions \ref{Pro-2-4} and \ref{Pro-2-7}.

\subsection{Constructions for rational maps with branch datum $\{[2],[2]\}$}
By Theorem~\ref{Thm1}, for any rational map $f : \overline{\mathbb{C}} \to \overline{\mathbb{C}}$, the space $(\overline{\mathbb{C}}, f^*\mathrm{d}s^2_0)$ admits a decomposition into pieces, each isometric to a football. This enables the construction of rational maps by gluing suitable footballs. In particular, one may assemble such maps from standard footballs $S^2_{\{1,1\}}$. In this section, we present several elementary constructions of rational maps with branch datum $\{[2],[2]\}$ via this approach. We remark that Method~4 demonstrates that rational maps can be constructed by gluing footballs along arbitrary suitable geodesics.

Let $\Phi(z) = \frac{4|z|^2}{1 + |z|^2}$ be the function defined on a standard football $S^2_{\{1,1\}}\cong \mathbb{C}\cup\{\infty\}$.

\noindent\textbf{Method 1.}
Take two standard footballs and cut each along a geodesic connecting the maximum and minimum of $\Phi$, as shown in Figure~\ref{fig:case2}. The resulting surface corresponds to a rational map $f$ with two branch points and branch datum $\{[2],[2]\}$. Since one branch point is a minimum of $\Phi$ and the other is neither a maximum nor a minimum, they correspond to a zero and a non-extremal point of $f$, respectively. Hence, $f$ takes the form
\[
f(z) = \frac{(z - a)^2}{(z - b_1)(z - b_2)} = \frac{z - a}{z - b_1} \cdot \frac{z - a}{z - b_2},
\]
where $a, b_1, b_2$ are distinct complex numbers. Note that the gluing identifies points with equal function values along the incision.

\begin{figure}[htbp]
\centering
\includegraphics[width=7cm]{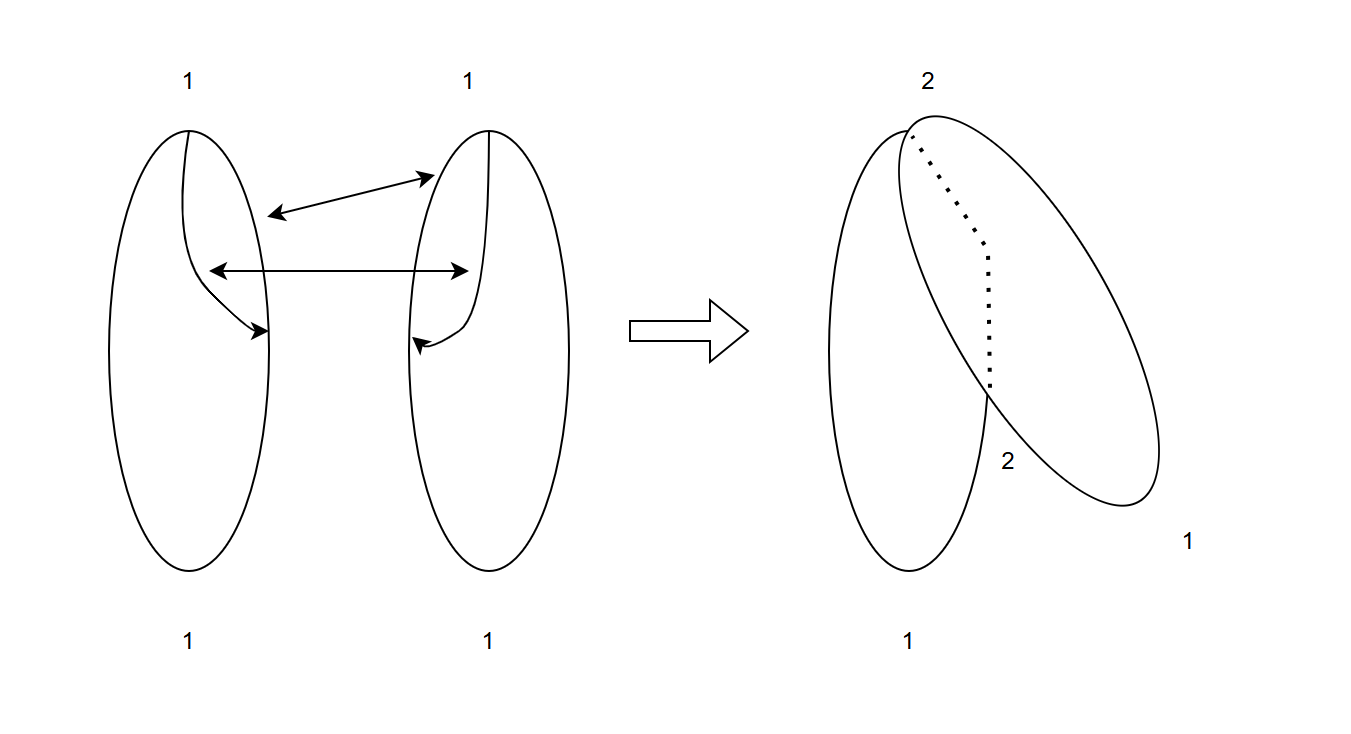}
\caption{Construction of $f(z) = \frac{(z - a)^2}{(z - b_1)(z - b_2)}$.}
\label{fig:case2}
\end{figure}

\noindent\textbf{Method 2.}
Take two standard footballs and cut each along a geodesic connecting the maximum and minimum of $\Phi$, as shown in Figure~\ref{fig:Method2}. The resulting surface corresponds to a rational map $f$ with two branch points and branch datum $\{[2],[2]\}$. Since neither branch point is an extremum of $\Phi$, they correspond to non-extremal points of $f$. Therefore, $f$ takes the form
\[
f(z) = \frac{(z - a_1)(z - a_2)}{(z - b_1)(z - b_2)} = \frac{z - a_1}{z - b_1} \cdot \frac{z - a_2}{z - b_2},
\]
where $a_1, a_2, b_1, b_2$ are distinct complex numbers. Note that the gluing identifies points with equal function values along the incision.

\begin{figure}[htbp]
\centering
\includegraphics[width=7cm]{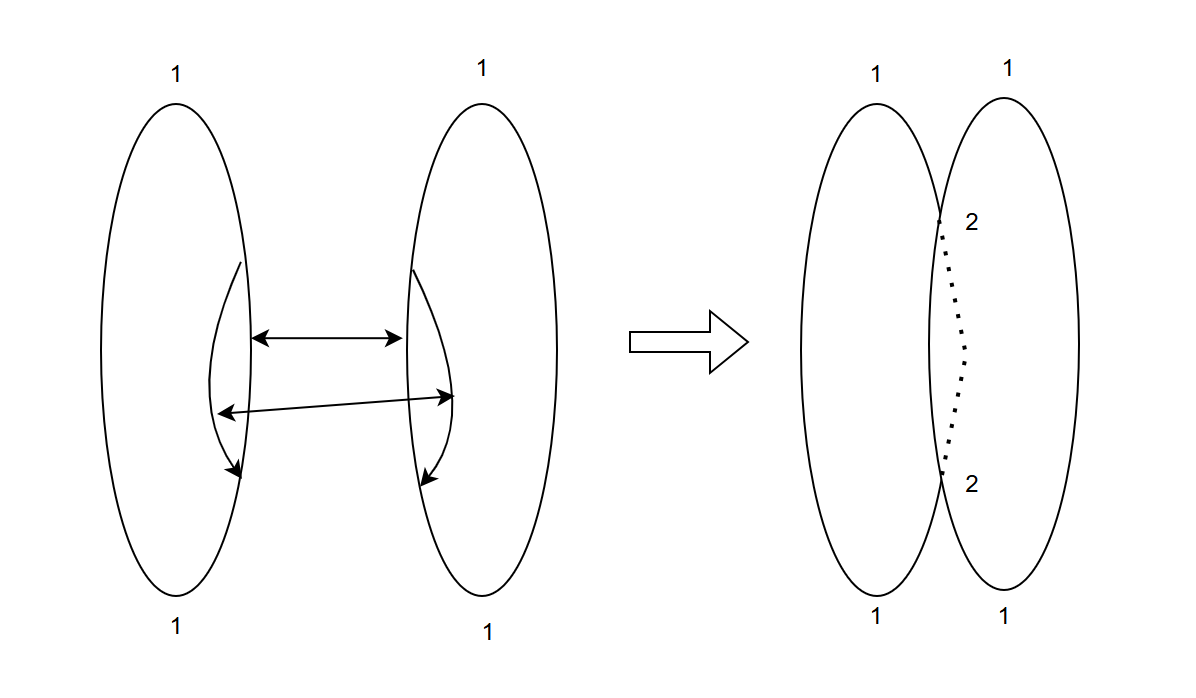}
\caption{Construction of $f(z) = \frac{(z - a_1)(z - a_2)}{(z - b_1)(z - b_2)}$.}
\label{fig:Method2}
\end{figure}

\noindent\textbf{Method 3.}
Take two standard footballs and cut each along a geodesic connecting the maximum and minimum of $\Phi$, as shown in Figure~\ref{fig:Method3}. Glue the two footballs along these cuts. The resulting surface corresponds to a rational map $f$ with two branch points and branch datum $\{[2],[2]\}$. Since one branch point is a maximum and the other a minimum of $\Phi$, they correspond to a pole and a zero of $f$, respectively. Hence, $f$ takes the form
\[
f(z) = \frac{(z - a)^2}{(z - b)^2} = \frac{z - a}{z - b} \cdot \frac{z - a}{z - b},
\]
where $a$ and $b$ are distinct complex numbers. In fact, we construct a football $S^{2}_{\{2,2\}}$. Note that the gluing identifies points with equal function values along the incision.

\begin{figure}[htbp]
\centering
\includegraphics[width=6cm]{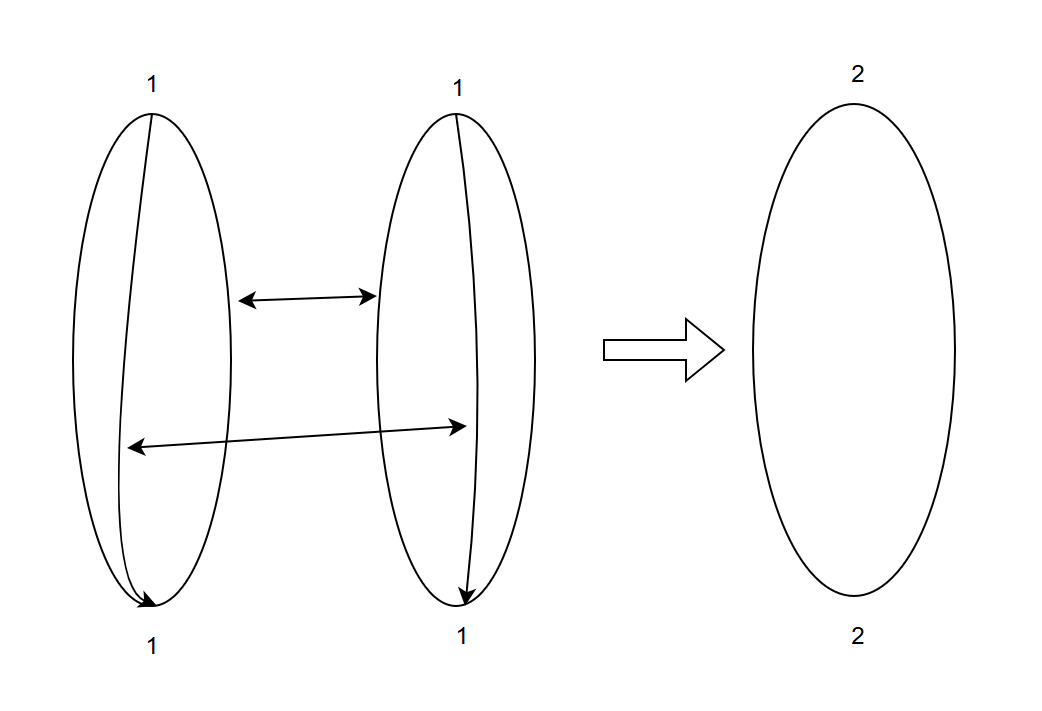}
\caption{Construction of $f(z) = \frac{(z - a)^2}{(z - b)^2}$.}
\label{fig:Method3}
\end{figure}

\noindent\textbf{Method 4.}
Begin with two standard footballs $S^2_{\{1,1\}}$, from which we obtain four pieces as illustrated in Figure~\ref{fig:Method4}.  Gluing the four pieces along these cuts, we can construct a  surface that corresponds to a rational map $f$ with two branch points and branch datum $\{[2],[2]\}$. Clearly, $f$ takes the form
\[
f(z) = \frac{(z - a_1)(z - a_2)}{(z - b_1)(z - b_2)} = \frac{z - a_1}{z - b_1} \cdot \frac{z - a_2}{z - b_2},
\]
where $a_1, a_2, b_1, b_2$ are distinct complex numbers.  Note that the gluing identifies points with equal function values along the incision.

\begin{figure}[htbp]
\centering
\includegraphics[width=8cm]{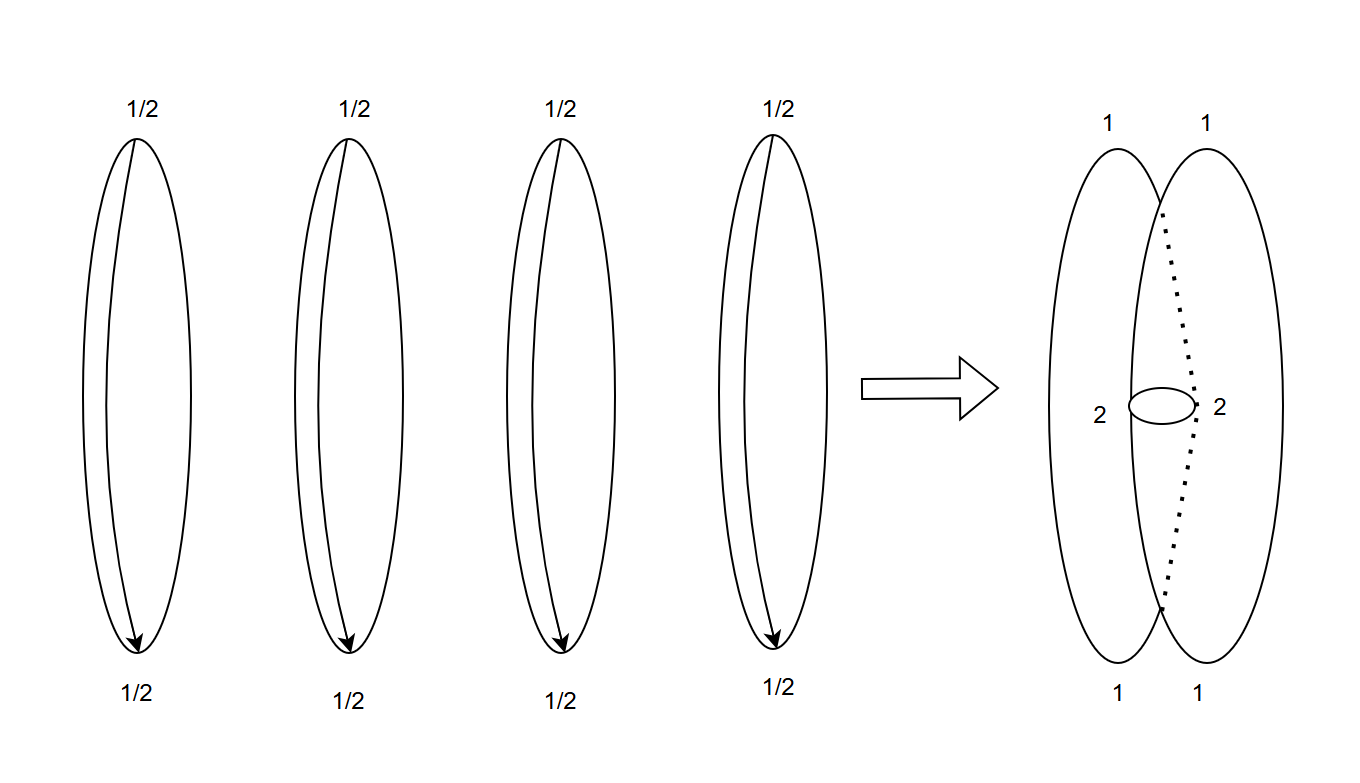}
\caption{Construction of $f(z) = \frac{(z - a_1)(z - a_2)}{(z - b_1)(z - b_2)}$.}
\label{fig:Method4}
\end{figure}

This construction is equivalent to the following procedure, shown in Figure~\ref{fig:Method5}: take two standard footballs, cut each along its equator, and glue the resulting pieces along these boundaries. The resulting surface corresponds to a rational map $f$ with two branch points and branch datum $\{[2],[2]\}$.

\begin{figure}[htbp]
\centering
\includegraphics[width=8cm]{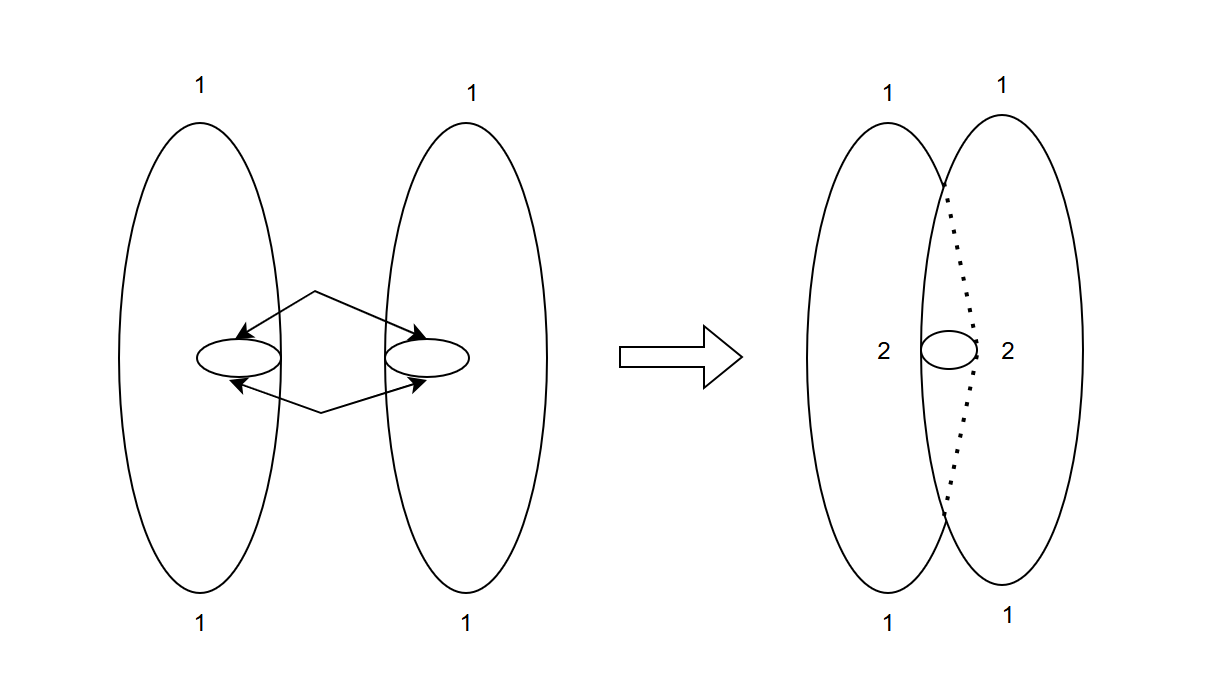}
\caption{Alternative construction of $f(z) = \frac{(z - a_1)(z - a_2)}{(z - b_1)(z - b_2)}$.}
\label{fig:Method5}
\end{figure}

\subsection{Some explicit examples}

In this section, we employ the first two elementary constructions from the previous section to illustrate the proof technique for Theorem~\ref{Thm2}. Example~\ref{Ex1} is the simplest case, as one partition in its branch datum has length~1. In contrast, Example~\ref{Ex4} is the most involved, requiring a more refined analysis. A thorough understanding of Example~\ref{Ex4} is essential to the proof of Theorem~\ref{Thm2}.

\begin{ex}\label{Ex1}
The branch datum $\mathcal{D} = \{[5], [2,2,1], [2,2,1]\}$ is realizable.
\end{ex}

\begin{proof}
First, one may readily verify that $\mathcal{D}$ is a candidate branch datum. Second, the minimum length among the partitions in $\mathcal{D}$ is $1$, and $\mathcal{D}$ contains $3$ partitions, satisfying $3 >1 + 1$.

Suppose there exists a rational map $f:\overline{\mathbb{C}}\rightarrow \overline{\mathbb{C}}$ with branch datum $\mathcal{D}$. Choose the point corresponding to the partition $[5]$ as the zero of $f$; that is, we can suppose that $f$ admits the expression given in \eqref{Exp_R}. Since it consists of a single number $5$, we examine the following candidate branch datum:
\[
\mathcal{D}_1 = \{[4], [2,1,1], [2,2]\}.
\]
Since $[2,2]$ has two elements equal to $2$, while $[2,1,1]$ has only one element greater than $1$, we proceed to the next candidate:
\[
\mathcal{D}_2 = \{[3], [2,1], [2,1]\}.
\]
Similarly, we consider:
\[
\mathcal{D}_3 = \{[2], [2]\}.
\]
We now construct the rational map $f$ with branch datum $\mathcal{D}$ by recursively constructing maps for $\mathcal{D}_3$, $\mathcal{D}_2$, and $\mathcal{D}_1$.

\noindent \textbf{Step 1}: Take two standard footballs and use \textbf{Method~1} to construct a rational map with branch datum $\mathcal{D}_3 = \{[2], [2]\}$, as illustrated in Figure~\ref{fig:Ex1}.

\noindent \textbf{Step 2}: Take one standard football $S^{2}_{\{1,1\}}$. Similarly to \textbf{Method~1}, attach it to the branched covering from Step~1(see Figure~\ref{fig:Ex1}). This yields a rational map with branch datum $\mathcal{D}_2 = \{[3], [2,1], [2,1]\}$.

\noindent \textbf{Step 3}: Take one standard football $S^{2}_{\{1,1\}}$ and attach it to the branched covering from Step~2 (see Figure~\ref{fig:Ex1}). This produces a rational map with branch datum $\mathcal{D}_1 = \{[4], [2,1^2], [2,2]\}$.

\noindent \textbf{Step 4}: Take one standard football $S^{2}_{\{1,1\}}$ and attach it to the branched covering from Step~3 (see Figure~\ref{fig:Ex1}). The result is a rational map with the desired branch datum
\[
\mathcal{D} = \{[5], [2,2,1], [2,2,1]\}.
\]

\begin{figure}[htbp]
\centering
\includegraphics[width=9cm]{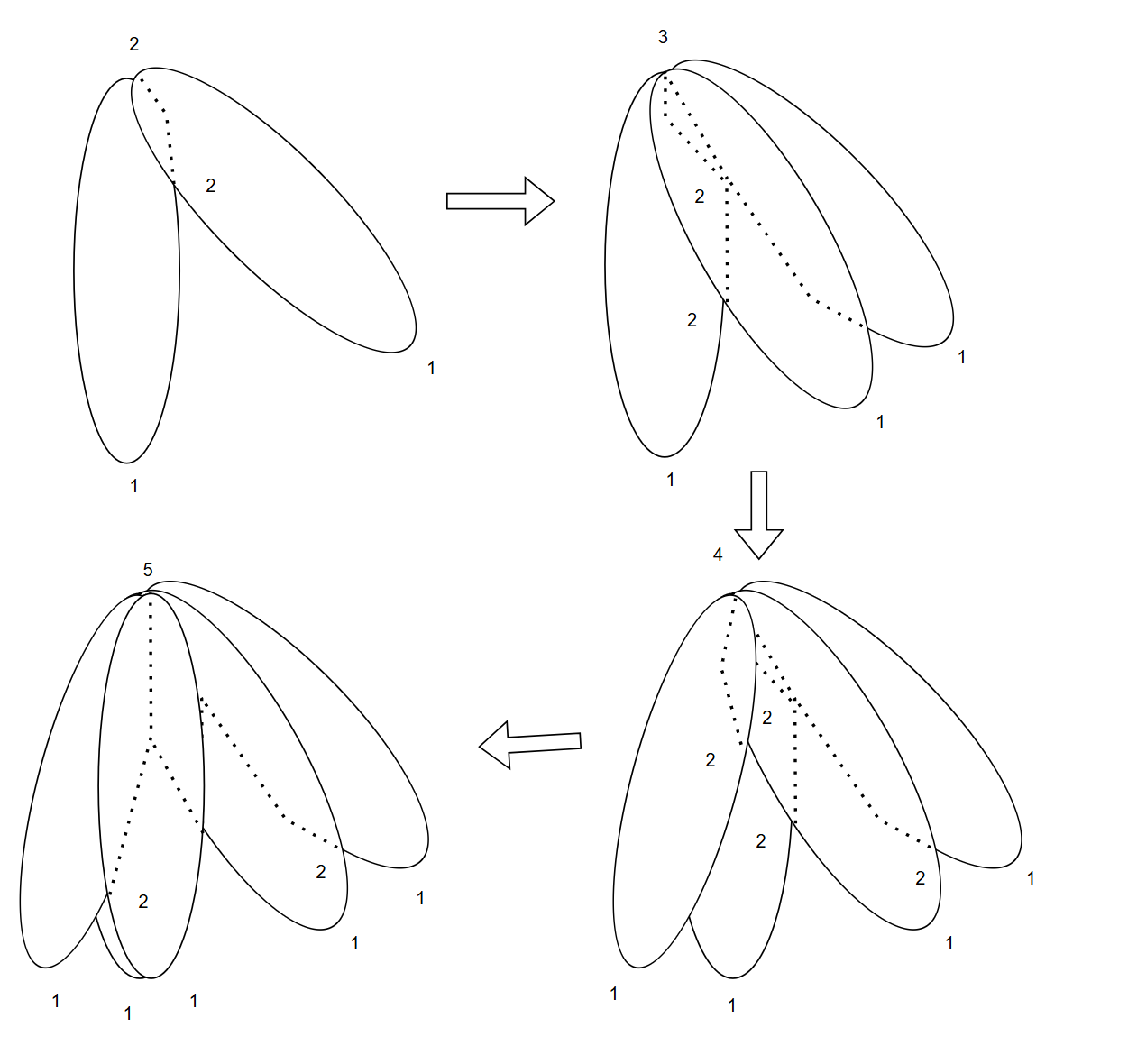}
\caption{Construction of $\mathcal{D} = \{[5], [2,2,1], [2,2,1]\}$.}
\label{fig:Ex1}
\end{figure}
\end{proof}

From Example~\ref{Ex1}, we observe that the rational function corresponding to the branch datum $\{[5], [2,2,1], [2,2,1]\}$ is not uniquely determined. Indeed, different constructions arise by rotating certain footballs.

\begin{ex}\label{Ex3}
The branch datum $\mathcal{D} = \{[4,1,1], [3,2,1], [2,2,1^2], [2,1^4], [2,1^4]\}$ is realizable.
\end{ex}

\begin{proof}
First, $\mathcal{D}$ is a candidate branch datum. Second, the minimum length is $3$, and $\mathcal{D}$ contains $5$ partitions, satisfying $5 >3 + 1$.

Suppose there exists a rational map $f:\overline{\mathbb{C}}\rightarrow \overline{\mathbb{C}}$ with branch datum $\mathcal{D}$. Choose the points corresponding to $[4,1,1]$ as the zeros. Since it contains $1$, we examine:
\[
\mathcal{D}_1 = \{[4,1], [3,1^2], [2,1^3], [2,1^3], [2,1^3]\}.
\]
As $[4,1]$ contains $1$, we proceed to:
\[
\mathcal{D}_2 = \{[4], [2,1^2], [2,1^2], [2,1^2]\}.
\]
Since the entry of $[4]$ exceeds $1$, we consider:
\[
\mathcal{D}_3 = \{[3], [2,1], [2,1]\}.
\]
Next, we consider:
\[
\mathcal{D}_4 = \{[2], [2]\}.
\]
We now recursively construct maps for $\mathcal{D}_4$, $\mathcal{D}_3$, $\mathcal{D}_2$, and $\mathcal{D}_1$.

\noindent \textbf{Step 1}: Take two standard footballs $S^{2}_{\{1,1\}}$. Using \textbf{Method~1}, construct a rational map with branch datum $\{[2], [2]\}$(see Figure~\ref{fig:Ex3}).

\noindent \textbf{Step 2}: Take one standard football $S^{2}_{\{1,1\}}$ and attach it to the branched covering from Step~1(see Figure~\ref{fig:Ex3}). This yields a rational map with branch datum $\mathcal{D}_3 = \{[3], [2,1], [2,1]\}$.

\noindent \textbf{Step 3}: Take one standard football $S^{2}_{\{1,1\}}$ and attach it to the branched covering from Step~2(see Figure~\ref{fig:Ex3}). This produces a rational map with branch datum $\mathcal{D}_2 = \{[4], [2,1^2], [2,1^2], [2,1^2]\}$.

\noindent \textbf{Step 4}: Take one standard football $S^{2}_{\{1,1\}}$ and attach it to the branched covering from Step~3(see Figure~\ref{fig:Ex3}). This gives a rational map with branch datum $\mathcal{D}_1 = \{[4,1], [3,1^2], [2,1^3], [2,1^3], [2,1^3]\}$.

\noindent \textbf{Step 5}: Take one standard football $S^{2}_{\{1,1\}}$ and attach it to the branched covering from Step~4 (see Figure~\ref{fig:Ex3}). The result is a rational map with the desired branch datum
\[
\mathcal{D} = \{[4,1,1], [3,2,1], [2,2,1^2], [2,1^4], [2,1^4]\}.
\]

\begin{figure}[htbp]
\centering
\includegraphics[width=11cm]{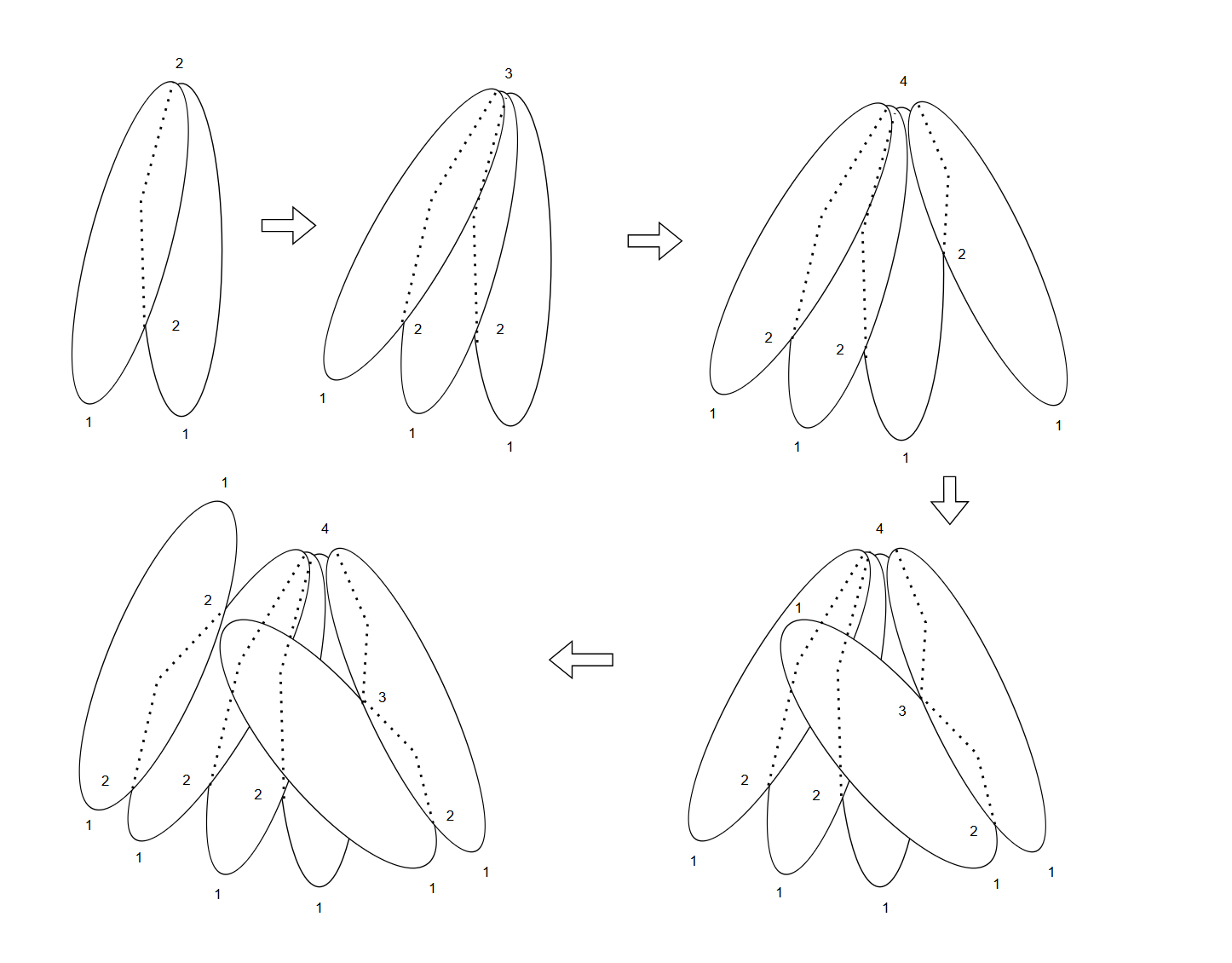}
\caption{Construction of $\mathcal{D} = \{[4,1,1], [3,2,1], [2,2,1^2], [2,1^4], [2,1^4]\}$.}
\label{fig:Ex3}
\end{figure}
\end{proof}

\begin{remark}
One may also choose the partition $[3,2,1]$ as the orders of the zeros of the desired rational map in Example \ref{Ex3}.
\end{remark}

\begin{ex}\label{Ex4}
The branch datum $\mathcal{D} = \{[4,2,2], [4,2,2], [2,2,1^4], [2,1^6], [2,1^6]\}$ is realizable.
\end{ex}

\begin{proof}
First, $\mathcal{D}$ is a candidate branch datum. Second, the minimum length is $3$, and $\mathcal{D}$ contains $5$ partitions, satisfying $5 > 3 + 1$.

Suppose there exists a rational map $f:\overline{\mathbb{C}}\rightarrow \overline{\mathbb{C}}$ with branch datum $\mathcal{D}$. Choose the points corresponding to $[4,2,2]$ as the zeros. Since all entries of $[4,2,2]$ are greater than $1$, we examine:
\[
\mathcal{D}_1 = \{[3,2,2], [4,2,1], [2,2,1^3], [2,1^5], [2,1^5]\}.
\]
We proceed to the next candidate:
\[
\mathcal{D}_2 = \{[2,2,2], [4,1^2], [2,2,1^2], [2,1^4], [2,1^4]\}.
\]
Next, we consider:
\[
\mathcal{D}_3 = \{[2,2,1], [4,1], [2,1^3], [2,1^3], [2,1^3]\}.
\]
Since $[2,2,1]$ contains $1$, we consider:
\[
\mathcal{D}_4 = \{[2,2], [3,1], [2,1,1], [2,1,1]\}.
\]
We then consider:
\[
\mathcal{D}_5 = \{[2,1], [2,1], [2,1], [2,1]\}.
\]
Finally, we consider:
\[
\mathcal{D}_6 = \{[2], [2]\}.
\]
We now recursively construct maps for $\mathcal{D}_l$, $l = 6, \ldots, 1$.

\noindent \textbf{Step 1}: Take two standard footballs $S^{2}_{\{1,1\}}$. Using \textbf{Method~1}, construct a rational map with branch datum $\mathcal{D}_6 = \{[2], [2]\}$ (see Figure~\ref{fig:Ex4}).

\noindent \textbf{Step 2}: Take one standard football $S^{2}_{\{1,1\}}$ and attach it to the branched covering from Step~1 (see Figure~\ref{fig:Ex4}). This yields a rational map with branch datum $\mathcal{D}_5 = \{[2,1], [2,1], [2,1], [2,1]\}$.

\noindent \textbf{Step 3}: Take one standard football $S^{2}_{\{1,1\}}$ and attach it to the branched covering from Step~2(see Figure~\ref{fig:Ex4}). This produces a rational map with branch datum $\mathcal{D}_4 = \{[2,2], [3,1], [2,1^2], [2,1^2]\}$.

\noindent \textbf{Step 4}: Take one standard football $S^{2}_{\{1,1\}}$ and attach it to the branched covering from Step~3(see Figure~\ref{fig:Ex4}). This gives a rational map with branch datum $\mathcal{D}_3 = \{[2,2,1], [4,1], [2,1^3], [2,1^3], [2,1^3]\}$.

\noindent \textbf{Step 5}: Take one standard football $S^{2}_{\{1,1\}}$ and attach it to the branched covering from Step~4(see Figure~\ref{fig:Ex4}). This yields a rational map with branch datum
\[
\mathcal{D}_2 = \{[2,2,2], [4,1,1], [2,2,1^2], [2,1^4], [2,1^4]\}.
\]

\noindent \textbf{Step 6}: Take one standard football $S^{2}_{\{1,1\}}$ and attach it to the branched covering from Step~5(see Figure~\ref{fig:Ex4}). This yields a rational map with branch datum
\[
\mathcal{D}_1 = \{[3,2,2], [4,2,1], [2,2,1^3], [2,1^5], [2,1^5]\}.
\]

\noindent \textbf{Step 7}: Take one standard football $S^{2}_{\{1,1\}}$ and attach it to the branched covering from Step~6 (as shown in Figure~\ref{fig:Ex4}). The result is a rational map with the desired branch datum
\[
\mathcal{D} = \{[4,2,2], [4,2,2], [2,2,1^4], [2,1^6], [2,1^6]\}.
\]
\begin{remark}
Another method to prove that the candidate branched datum $\mathcal{D}$ in Example \ref{Ex4} is realizable is as follows. Since the greatest common divisor of the two partitions $[4,2,2]$ and $[4,2,2]$ is $2$, and one can easily prove that both of the following two candidate branched data
$$
\{[2,2],[2,1^{2}],[2,1^{2}],[2,1^{2}],[2,1^{2}]\}
$$
and
$$
\{[2,1^{2}],[2,1^{2}],[2,1^{2}],[2,1^{2}],[2,1^{2}]\}
$$
are realizable, it follows that $\mathcal{D}$ is realizable \cite{SX20}.
\end{remark}

\begin{figure}[htbp]
\centering
\includegraphics[width=12cm]{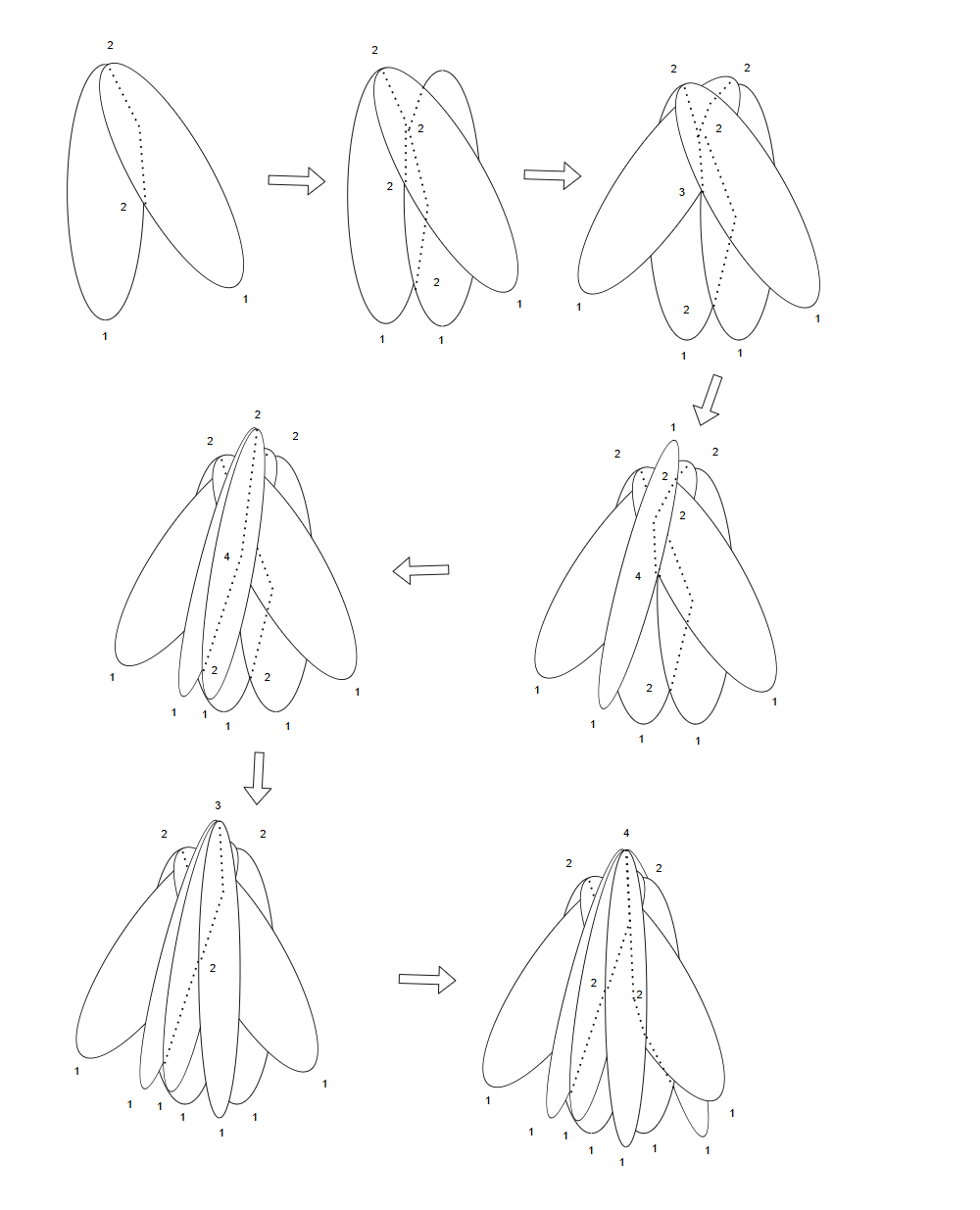}
\caption{Construction of $\mathcal{D} = \{[4,2,2], [4,2,2], [2,2,1^4], [2,1^6], [2,1^6]\}$.}
\label{fig:Ex4}
\end{figure}
\end{proof}

\section{Proof of Theorem \ref{Thm2}}

To prove Theorem~\ref{Thm2}, we first establish some preliminary properties about the partitions.

Let
\begin{equation}\label{notation}
\pi_i =\big[ a_{i1}, \dots, a_{ir_i}\big]= \big[ b_{i1}, \dots, b_{iq_i}, \underbrace{1, \dots, 1}_{e_i} \big], \quad i = 1, 2, \dots, k,
\end{equation}
be \( k \) nontrivial partitions of an integer \( d \geq 3 \), where for each \( i \), the entries satisfy \( b_{i1} \geq b_{i2} \geq \dots \geq b_{iq_i} \geq 2 \). Assume the collection
\[
\{ \pi_1, \dots, \pi_k \}
\]
satisfies the Riemann--Hurwitz formula and that \( |\pi_k| = q_k + e_k \leq |\pi_i| = q_i + e_i \) for all \( i = 1, \dots, k-1 \).

We then have the following:

\begin{lem}\label{le8.1}
Let \( k \geq q_k + e_k + 2 \). If \( q_1 \geq q_i \) for all \( i = 2, \dots, k-1 \), then \( e_i > 0 \) for all \( i = 2, \dots, k-1 \), and consequently \( q_i + e_i > \frac{d}{2} \).
\end{lem}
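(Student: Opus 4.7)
The plan is a proof by contradiction. Suppose first that $e_{i_{0}} = 0$ for some $i_{0} \in \{2, \dots, k-1\}$. Then every part of $\pi_{i_{0}}$ is at least $2$, and $\sum_{j} b_{i_{0}j} = d$ forces $q_{i_{0}} \leq d/2$; in particular $|\pi_{i_{0}}| = q_{i_{0}} \leq d/2$ and $|\pi_{k}| \leq |\pi_{i_{0}}| \leq d/2$. The extreme subcase $q_{i_{0}} = 1$ already closes cleanly: then $\pi_{i_{0}} = [d]$ and thus $\pi_{k} = [d]$, so the two corresponding terms account for $(d-1) + (d-1) = 2d - 2$ of the total branching, leaving zero for the remaining $k - 2 \geq 1$ nontrivial partitions---a contradiction.

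The harder subcase, $q_{i_{0}} \geq 2$, together with the claim $q_{i} + e_{i} > d/2$, I would handle uniformly via two per-partition inequalities: $d - |\pi_{l}| \geq q_{l}$ (since each $b_{lj} - 1 \geq 1$) and $d - |\pi_{l}| \geq (d - e_{l})/2$ (from $2q_{l} + e_{l} \leq d$). The second rearranges to $|\pi_{l}| \leq (d + e_{l})/2$, which implies that $|\pi_{l}| > d/2$ automatically forces $e_{l} > 0$; this collapses both conclusions into the single assertion $|\pi_{i}| > d/2$ for $i \in \{2, \dots, k-1\}$. Arguing against this, $|\pi_{i_{0}}| \leq d/2$ gives $d - |\pi_{i_{0}}|, d - |\pi_{k}| \geq d/2$, and the hypothesis $q_{1} \geq q_{i_{0}}$ yields $d - |\pi_{1}| \geq q_{1} \geq q_{i_{0}}$. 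Substituting into Riemann--Hurwitz,
\[
2d - 2 \;=\; \sum_{l=1}^{k}\bigl(d - |\pi_{l}|\bigr) \;\geq\; q_{i_{0}} + d/2 + d/2 + (k - 3),
\]
one obtains $q_{i_{0}} + k \leq d + 1$.

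The main obstacle is converting this bound into a genuine contradiction with $k \geq |\pi_{k}| + 2$. Combining naively with $q_{i_{0}} \geq |\pi_{k}|$ (valid in the $e_{i_{0}} = 0$ case) yields only $|\pi_{k}| \leq (d-1)/2$, which is a restriction rather than an impossibility. To close the gap I would refine the residual $k - 3$ terms by replacing $d - |\pi_{l}| \geq 1$ with the sharper $d - |\pi_{l}| \geq q_{l}$, then exploit the maximality $q_{1} \geq q_{l}$ (for $l \in \{2, \dots, k-1\}$) together with $q_{1} \leq d - |\pi_{1}|$ and $|\pi_{1}| \geq |\pi_{k}|$ to force additional branching units onto $\pi_{1}$ whenever $q_{i_{0}}$ grows. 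A case split on the value of $q_{i_{0}}$, in which the $q_{i_{0}} \geq 2$ case cascades into extra non-trivial parts in $\pi_{1}$ that consume more branching than the slack in $q_{i_{0}} + k \leq d + 1$ allows, should pin the inequality tightly enough to contradict $k \geq |\pi_{k}| + 2$ and complete the proof.
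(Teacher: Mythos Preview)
Your contradiction setup is sound, but the argument stalls because you discard exactly the information needed to close it. When you replace $d - |\pi_{i_0}|$ and $d - |\pi_k|$ by the cruder lower bound $d/2$, you erase all reference to $|\pi_k| = q_k + e_k$ from the inequality; the resulting bound $q_{i_0} + k \leq d+1$ then has no hope of contradicting the hypothesis $k \geq |\pi_k| + 2$, which is phrased in terms of $|\pi_k|$ rather than $d$. Your proposed refinement---sharpening the residual $k-3$ terms and case-splitting on $q_{i_0}$---does not restore the lost quantity and remains only a hope, not a proof.

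The paper avoids this by keeping those two special terms \emph{exact}: $d - |\pi_t| = d - q_t$ (since $e_t = 0$) and $d - |\pi_k| = d - (q_k + e_k)$. Bounding only the other $k-2$ terms via $d - |\pi_i| = \sum_j(b_{ij}-1) \geq q_i$, and then $q_i \geq 1$ for $i \neq 1,t$, one obtains in a single line
\[
q_t \;\geq\; 2 + q_1 + (k-3) - (q_k + e_k) \;\geq\; q_1 + 1,
\]
contradicting $q_1 \geq q_t$. No case analysis is required; the separate treatment of $q_{i_0}=1$ is also unnecessary.

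A further warning: your plan to collapse both conclusions into the single target $|\pi_i| > d/2$ is unsafe, because that stronger assertion need not hold under the stated hypotheses. Take $d=6$, $k=4$, $\pi_1=\pi_3=[2,1,1,1,1]$, $\pi_2=[5,1]$, $\pi_4=[3,3]$: all hypotheses are satisfied and $e_2=1>0$, yet $|\pi_2|=2 \not> 3 = d/2$. (This also shows the lemma's ``consequently'' clause is an overstatement; fortunately only $e_i>0$ is used downstream.) So aim the contradiction at $e_{i_0}=0$ directly, not at $|\pi_{i_0}|\le d/2$.
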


\begin{proof}
Assume, for contradiction, that \( e_t = 0 \) for some \( t \in \{2, \dots, k-1\} \). Then \( \pi_t = [b_{t1}, \dots, b_{tq_t}] \). By the Riemann--Hurwitz formula:
\[
\begin{aligned}
2d - 2 &= \sum_{i=1}^{k} \sum_{j=1}^{q_i} (b_{ij} - 1) \\
&= \big[d - (q_k + e_k)\big] + (d - q_t) + \sum_{\substack{i=1 \\ i \ne t}}^{k-1} \sum_{j=1}^{q_i} (b_{ij} - 1).
\end{aligned}
\]
Rearranging terms, we obtain:
\[
\begin{aligned}
q_t &= 2 + \sum_{\substack{i=1 \\ i \ne t}}^{k-1} \sum_{j=1}^{q_i} (b_{ij} - 1) - (q_k + e_k) \\
&\geq 2 + \sum_{\substack{i=1 \\ i \ne t}}^{k-1} q_i - (q_k + e_k) \quad (\text{since } b_{ij} \geq 2) \\
&= 2 + q_1 + \sum_{\substack{i=2 \\ i \ne t}}^{k-1} q_i - (q_k + e_k) \\
&\geq 2 + q_1 + (k - 3) - (q_k + e_k) \quad (\text{since } q_i \geq 1) \\
&= q_1 + k - (q_k + e_k + 1).
\end{aligned}
\]
Given \( k \geq q_k + e_k + 2 \), it follows that:
\[
q_t > q_1,
\]
contradicting the assumption \( q_1 \geq q_t \). Therefore, \( e_i > 0 \) for all \( i = 2, \dots, k-1 \). Moreover, since \( \pi_i \) is a partition of \( d \), we have \( q_i + e_i > \frac{d}{2} \) for all \( i = 2, \dots, k-1 \).
\end{proof}

\begin{lem}\label{le8.2}
Let \( k \geq q_k + e_k + 2 \). Suppose \( e_k = 0 \), and assume \( q_1 \geq q_i \) for all \( i = 2, \dots, k-1 \). If \( q_1 = q_i \) for some \( i \in \{2, \dots, k-1\} \), further assume \( b_{11} \geq b_{i1} \). Define
\[
\begin{aligned}
&\hat{\pi}_1 = \big[ b_{11}, \dots, b_{1q_1} - 1, \underbrace{1, \dots, 1}_{e_1} \big], \\
&\hat{\pi}_i = \big[ b_{i1}, \dots, b_{iq_i}, \underbrace{1, \dots, 1}_{e_i - 1} \big], \quad i = 2, \dots, k-1 \quad (\text{by Lemma~\ref{le8.1}, } e_i > 0), \\
&\hat{\pi}_k = \big[ b_{k1} - 1, \dots, b_{kq_k} \big].
\end{aligned}
\]
Then the following hold.

\noindent \textbf{Case 1:} \(\hat{\pi}_1 \neq [1, \dots, 1]\)
\begin{enumerate}
    \item \(\hat{\pi}_1, \dots, \hat{\pi}_k\) are \(k\) nontrivial partitions of \(d - 1\).
    \item The partitions \(\hat{\pi}_1, \dots, \hat{\pi}_k\) satisfy the Riemann--Hurwitz formula.
    \item \(q_k \leq q_i + e_i\) for all \(i = 1, \dots, k - 1\).
\end{enumerate}

\noindent \textbf{Case 2:} \(\hat{\pi}_1 = [1, \dots, 1]\)
\begin{enumerate}
    \item \(\pi_i = [2, \underbrace{1, \dots, 1}_{d - 2}]\) for all \(i = 1, \dots, k - 1\), and \(k = d + q_k + 1 > q_k + 2\).
    \item The partitions \(\hat{\pi}_2, \dots, \hat{\pi}_k\) satisfy the Riemann--Hurwitz formula.
    \item \(q_k \leq q_i + e_i\) for all \(i = 2, \dots, k - 1\).
\end{enumerate}
\end{lem}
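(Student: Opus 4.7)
The plan is to prove both cases by bookkeeping the effect of the three modifications on the quantities $d - |\pi_i|$ in the Riemann--Hurwitz identity. First I would verify that each $\hat{\pi}_i$ is a partition of $d - 1$: each of the three modifications (reducing $b_{1q_1}$ by $1$, deleting one trivial part, reducing $b_{k1}$ by $1$) subtracts exactly $1$ from the total. Crucially, these modifications act on the length $|\hat{\pi}_i|$ as follows: $|\hat{\pi}_1| = |\pi_1|$ regardless of whether $b_{1q_1} - 1$ remains $\geq 2$ or collapses into the trivial block, because either way the multiset cardinality is preserved; $|\hat{\pi}_i| = |\pi_i| - 1$ for $2 \leq i \leq k-1$ since a single $1$-part is removed; and $|\hat{\pi}_k| = |\pi_k|$ since $b_{k1} \geq 2$ ensures $b_{k1} - 1 \geq 1$. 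Summing these shifts gives $\sum_{i=1}^{k}(d - 1 - |\hat{\pi}_i|) = \sum_{i=1}^{k}(d - |\pi_i|) - 2 = 2d - 4$, which is the Riemann--Hurwitz formula in degree $d - 1$.

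For Case 1, I would then check nontriviality: $\hat{\pi}_1$ is nontrivial by hypothesis; $\hat{\pi}_i$ for $2 \leq i \leq k - 1$ retains $b_{i1} \geq 2$; and $\hat{\pi}_k$ retains some part $\geq 2$ when $q_k \geq 2$, while for $q_k = 1$ it reduces to $[d - 1]$, which is nontrivial since $d \geq 3$. Part (3) is then immediate, since the original hypothesis $|\pi_k| \leq |\pi_i|$ combined with $e_k = 0$ is exactly $q_k \leq q_i + e_i$.

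For Case 2, the decisive step is extracting the rigid structure forced by $\hat{\pi}_1 = [1,\ldots,1]$. Because every entry of $\hat{\pi}_1$ must be $1$ and each $b_{1j} \geq 2$, one is forced to $q_1 = 1$ and $b_{11} = 2$, so $\pi_1 = [2, 1^{d-2}]$. The monotonicity hypothesis $q_1 \geq q_i$, together with the nontriviality of each $\pi_i$, then forces $q_i = 1$ for $2 \leq i \leq k - 1$, and the tie-breaking hypothesis $b_{11} \geq b_{i1}$ pins $b_{i1} = 2$. Hence $\pi_i = [2, 1^{d-2}]$ for every $i < k$. Substituting into $\sum_i(d - |\pi_i|) = 2d - 2$ then solves for $k$ in terms of $d$ and $q_k$ as claimed, and the strict inequality $k > q_k + 2$ follows. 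The Riemann--Hurwitz identity for $\hat{\pi}_2, \ldots, \hat{\pi}_k$ uses the same length bookkeeping as in Case 1 (restricted to $i \neq 1$), and part (3) is once again inherited directly from $|\pi_k| \leq |\pi_i|$.

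The main obstacle I anticipate is keeping the length accounting tidy---specifically, the subtle fact that subtracting $1$ from a $2$ does not change a partition's length because the resulting $1$ is absorbed into the trivial block, which is what makes $|\hat{\pi}_1| = |\pi_1|$ hold uniformly. Once this invariance is established, each assertion reduces to a line of arithmetic plus the casework already outlined.
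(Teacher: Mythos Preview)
Your proposal is correct and follows exactly the paper's approach: the paper's proof of this lemma is itself just ``direct computation,'' and you have spelled out that computation in detail (length bookkeeping for the Riemann--Hurwitz count, nontriviality checks, and in Case~2 the deduction $q_1=1$, $b_{11}=2$ forcing $\pi_i=[2,1^{d-2}]$ via the tie-breaking hypothesis). One minor caution: when you write ``solves for $k$ \ldots\ as claimed,'' actually carrying out the arithmetic gives $k = d + q_k - 1$, so the ``$+1$'' in the displayed formula appears to be a typo in the statement; your method is sound, but don't simply defer to the printed value.
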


\begin{proof}
If \( e_k = 0 \), then since \( \pi_k \) is a nontrivial partition of \( d \), we have \( q_k \leq \frac{d}{2} \).

\noindent\textbf{Case 1:} \(\hat{\pi}_1 \neq [1, \dots, 1]\). A direct computation verifies statements (1), (2), and (3).

\noindent\textbf{Case 2:} \(\hat{\pi}_1 = [1, \dots, 1]\). Then \( q_1 = 1 \) and \( b_{11} = 2 \). Moreover, by our assumption,
\[
\pi_i = [2, \underbrace{1, \dots, 1}_{d - 2}], \quad \text{for all } i = 1, \dots, k - 1.
\]
By the Riemann--Hurwitz formula,
\[
2d - 2 = (k - 1) + (d - q_k),
\]
which implies
\[
k = d + q_k + 1 > q_k + 2.
\]
Again, a direct calculation confirms statements (1), (2), and (3).
\end{proof}

\begin{lem}\label{le8.3}
Let \( k \geq q_k + e_k + 2 \). Suppose \( e_k > 0 \), and assume \( q_1 \geq q_2 \geq q_i \) for all \( i = 3, \dots, k-1 \). If \( q_1 = q_i \) or \( q_2 = q_i \) for some \( i \in \{2, \dots, k-1\} \), further assume \( b_{11} \geq b_{i1} \). Define
\[
\begin{aligned}
&\hat{\pi}_j = \big[ b_{j1}, \dots, b_{jq_j} - 1, \underbrace{1, \dots, 1}_{e_j} \big], \quad j = 1, 2, \\
&\hat{\pi}_i = \big[ b_{i1}, \dots, b_{iq_i}, \underbrace{1, \dots, 1}_{e_i - 1} \big], \quad i = 3, \dots, k-1 \quad (\text{by Lemma~\ref{le8.1}, } e_i > 0), \\
&\hat{\pi}_k = \big[ b_{k1}, \dots, b_{kq_k}, \underbrace{1, \dots, 1}_{e_k - 1} \big].
\end{aligned}
\]
Then the following hold.

\noindent \textbf{Case 1:} \(\hat{\pi}_2 \neq [1, \dots, 1]\)
\begin{enumerate}
    \item \(\hat{\pi}_1, \dots, \hat{\pi}_k\) are \(k\) nontrivial partitions of \(d - 1\).
    \item The partitions \(\hat{\pi}_1, \dots, \hat{\pi}_k\) satisfy the Riemann--Hurwitz formula.
    \item \(q_k + e_k - 1 \leq q_1 + e_1\) and \(q_k + e_k - 1 \leq q_i + e_i - 1\) for all \(i = 2, \dots, k - 1\).
\end{enumerate}

\noindent \textbf{Case 2:} \(\hat{\pi}_1 \neq [1, \dots, 1]\) and \(\hat{\pi}_2 = [1, \dots, 1]\)
\begin{enumerate}
    \item \(\pi_i = [2, \underbrace{1, \dots, 1}_{d - 2}]\) for all \(i = 2, \dots, k - 1\), and \(k = q_k + e_k + q_1 + e_1 \geq q_k + e_k + 2\).
    \item The partitions \(\hat{\pi}_1, \hat{\pi}_3, \dots, \hat{\pi}_k\) satisfy the Riemann--Hurwitz formula.
    \item \(q_k + e_k - 1 \leq q_1 + e_1\) and \(q_k + e_k - 1 \leq q_i + e_i\) for all \(i = 3, \dots, k - 1\).
\end{enumerate}

\noindent \textbf{Case 3:} \(\hat{\pi}_1 = [1, \dots, 1]\)
\begin{enumerate}
    \item \(\pi_i = [2, \underbrace{1, \dots, 1}_{d - 2}]\) for all \(i = 1, \dots, k - 1\), and \(k = d + q_k + e_k - 1 \geq q_k + e_k + 2\).
    \item The partitions \(\hat{\pi}_2, \dots, \hat{\pi}_k\) satisfy the Riemann--Hurwitz formula.
    \item \(q_k + e_k - 1 \leq q_i + e_i - 1\) for all \(i = 2, \dots, k - 1\).
\end{enumerate}
\end{lem}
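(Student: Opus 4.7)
The plan is to verify the three cases by direct computation, in parallel with the proof of Lemma~\ref{le8.2}. The key observations are that $|\hat{\pi}_j| = q_j + e_j$ for $j = 1, 2$ (one part is merely decremented), $|\hat{\pi}_i| = q_i + e_i - 1$ for $i = 3, \ldots, k$ (one ``$1$'' is removed), and every $\hat{\pi}_i$ sums to $d - 1$. The hypothesis $e_k > 0$ together with Lemma~\ref{le8.1} guarantees $e_i > 0$ for all $i = 2, \ldots, k-1$, so the operation of removing a ``$1$'' for $i \geq 3$ is always legal.

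For Case 1, where $\hat{\pi}_2 \neq [1, \ldots, 1]$, each $\hat{\pi}_i$ with $i \geq 3$ is nontrivial since the block $b_{i1}, \ldots, b_{iq_i}$ with $q_i \geq 1$ is preserved, and likewise $\hat{\pi}_k$ is nontrivial. For $\hat{\pi}_1$, a contradiction argument suffices: if $\hat{\pi}_1 = [1, \ldots, 1]$, then $q_1 = 1$ and $b_{11} = 2$; the inequality $q_2 \leq q_1 = 1$ forces $q_2 = 1$, and the tie-breaking hypothesis $b_{11} \geq b_{21}$ combined with $b_{21} \geq 2$ yields $b_{21} = 2$, making $\hat{\pi}_2 = [1, \ldots, 1]$ and contradicting the Case 1 assumption. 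The Riemann--Hurwitz identity for partitions of $d - 1$ is then a one-line rearrangement of the original identity using the lengths just recorded, and the length inequalities in (3) follow directly from $q_k + e_k \leq q_i + e_i$.

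For Case 2, where $\hat{\pi}_2 = [1, \ldots, 1]$ but $\hat{\pi}_1 \neq [1, \ldots, 1]$, the collapse of $\hat{\pi}_2$ forces $q_2 = 1$ and $b_{21} = 2$, so $\pi_2 = [2, 1^{d-2}]$. For $i = 3, \ldots, k-1$, the inequality $q_i \leq q_2 = 1$ gives $q_i = 1$, and invoking the implicit second-level tie-breaking ordering $b_{21} \geq b_{i1}$ among the partitions with $q_i = q_2$, combined with $b_{i1} \geq 2$, yields $b_{i1} = 2$ and hence $\pi_i = [2, 1^{d-2}]$. Substituting these forms into the Riemann--Hurwitz identity produces $k = q_1 + e_1 + q_k + e_k$. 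Since $|\pi_k| \leq |\pi_1|$ and $e_k > 0$ preclude $\pi_1 = [d]$ (which would force $e_k = 0$), we have $q_1 + e_1 \geq 2$ and hence $k \geq q_k + e_k + 2$. The Riemann--Hurwitz check for $\hat{\pi}_1, \hat{\pi}_3, \ldots, \hat{\pi}_k$ and the length inequalities are then routine.

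Case 3 ($\hat{\pi}_1 = [1, \ldots, 1]$) proceeds analogously: one gets $\pi_1 = [2, 1^{d-2}]$ and hence $q_1 = 1$, which forces $q_i = 1$ for $i = 2, \ldots, k-1$; then $b_{11} \geq b_{i1} \geq 2$ gives $b_{i1} = 2$, so $\pi_i = [2, 1^{d-2}]$ throughout. The Riemann--Hurwitz identity yields $k = d + q_k + e_k - 1$, and $k \geq q_k + e_k + 2$ reduces to $d \geq 3$. The remaining Riemann--Hurwitz check is direct (noting that $\hat{\pi}_2 = [1, \ldots, 1]$ is the trivial partition of $d - 1$, contributing nothing to the branching count), and the length inequalities follow from $|\pi_i| = d - 1$. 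The main obstacle is the second-level tie-breaking argument in Case 2 needed to pin down $b_{i1} = 2$ for $i \geq 3$; everything else amounts to bookkeeping with the Riemann--Hurwitz identity.
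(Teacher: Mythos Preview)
Your proposal is correct and follows exactly the approach the paper takes---the paper's own proof reads in its entirety ``The assertions follow by direct computation,'' so you have simply supplied the bookkeeping that the paper omits. Your flagged concern about the Case~2 tie-breaking (needing $b_{21} \geq b_{i1}$ rather than the stated $b_{11} \geq b_{i1}$ when $q_2 = q_i$) is legitimate and appears to reflect a minor imprecision in the lemma's hypothesis rather than a flaw in your argument; with the intended ordering your verification goes through.
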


\begin{proof}
The assertions follow by direct computation.
\end{proof}

The proof of Theorem~\ref{Thm2} proceeds by induction on the degree $d$. The core idea is geometric and constructive: we build the desired rational map by inductively gluing standard footballs to a branched covering of lower degree. The induction hypothesis provides a "seed" map whose own geometric decomposition is known, and the inductive step amounts to surgically attaching one more football to this structure, thereby increasing the degree by one and precisely achieving the required branching datum. Clearly, this implies the following result, which was first proved by Bara\'{n}ski for the case of rational maps (see Proposition 11 in \cite{Bar01}, though it contains a typo). Furthermore, this result also extends to meromorphic functions.

\begin{prop}
Under the notation of (\ref{notation}), suppose the candidate branch datum \(\mathcal{D} = \{\pi_1, \ldots, \pi_k\}\) of degree \(d\) is realizable. Then for any indices \(1 \leq i_0 \leq k\) and \(1 \leq j_0 \leq r_{i_0}\), the candidate branch datum \(\hat{\mathcal{D}} = \{\hat{\pi}_1, \ldots, \hat{\pi}_k, \hat{\pi}_{k+1}\}\) of degree \(d+1\) defined by
\[
\hat{\pi}_i = [a_{i1}, \ldots, a_{i r_i}, 1], \quad i \neq i_0, k+1,
\]
\[
\hat{\pi}_{i_0} = [a_{i_0,1}, a_{i_0,2}, \ldots, a_{i_0, j_0} + 1, a_{i_0, j_0+1}, \ldots, a_{i_0, r_{i_0}}],
\]
and
\[
\hat{\pi}_{k+1} = [2, \underbrace{1, \ldots, 1}_{d-1}],
\]
is also realizable.

Similarly, if \(\mathcal{D}\) is realizable, then the candidate branch datum \(\hat{\mathcal{D}} = \{\hat{\pi}_1, \ldots, \hat{\pi}_k, \hat{\pi}_{k+1}, \hat{\pi}_{k+2}\}\) of degree \(d+1\) defined by
\[
\hat{\pi}_i = [a_{i1}, \ldots, a_{i r_i}, 1], \quad 1 \leq i \leq k,
\]
and
\[
\hat{\pi}_{k+1} = \hat{\pi}_{k+2} = [2, \underbrace{1, \ldots, 1}_{d-1}],
\]
is realizable as well.
\end{prop}

 We now proceed to the proof of Theorem~\ref{Thm2}.

For the base case \( d = 3 \), we have either:
\begin{itemize}
    \item \( k = 3 \) with \( \pi_1 = \pi_2 = [2, 1] \), \( \pi_3 = [3] \), or
    \item \( k = 4 \) with \( \pi_i = [2, 1] \) for all \( i = 1, 2, 3, 4 \).
\end{itemize}

In the case \( k = 3 \) with branch data \( \{[2,1], [2,1], [3]\} \), a rational map \( f : \overline{\mathbb{C}} \to \overline{\mathbb{C}} \) can be constructed by gluing three standard footballs \( S^2_{\{1,1\}} \), as illustrated in Figure~\ref{fig:Proof1}. The figure shows that the vertex corresponding to the partition \( [3] \) is connected to the vertex for the entry \( 2 \) in \( [2,1] \) via a standard football.

\begin{figure}[htbp]
\centering
\includegraphics[width=7cm]{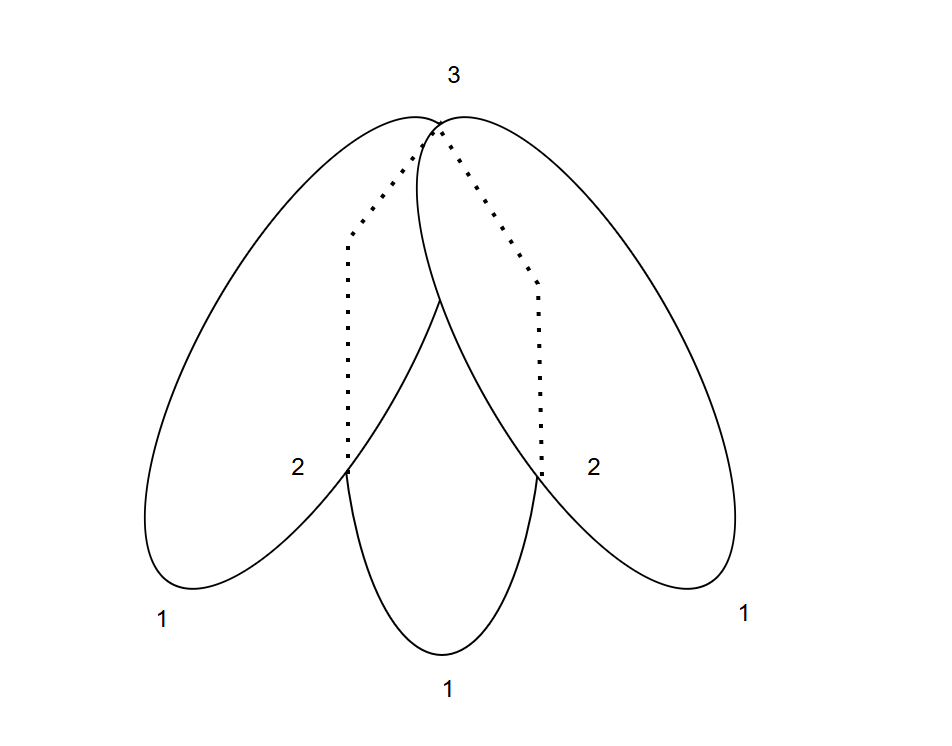}
\caption{Construction for branch data \( \{[2,1], [2,1], [3]\} \).}
\label{fig:Proof1}
\end{figure}

For the case \( k = 4 \) with all \( \pi_i = [2,1], i=1,2,3,4 \), a rational map can be similarly constructed by gluing standard footballs, as shown in Figure~\ref{fig:Proof2}. Here, the vertex for the entry \( 2 \) in one partition \( [2,1] \) is connected to the vertex for the entry \( 2 \) in another \( [2,1] \) via a standard football.

\begin{figure}[htbp]
\centering
\includegraphics[width=7cm]{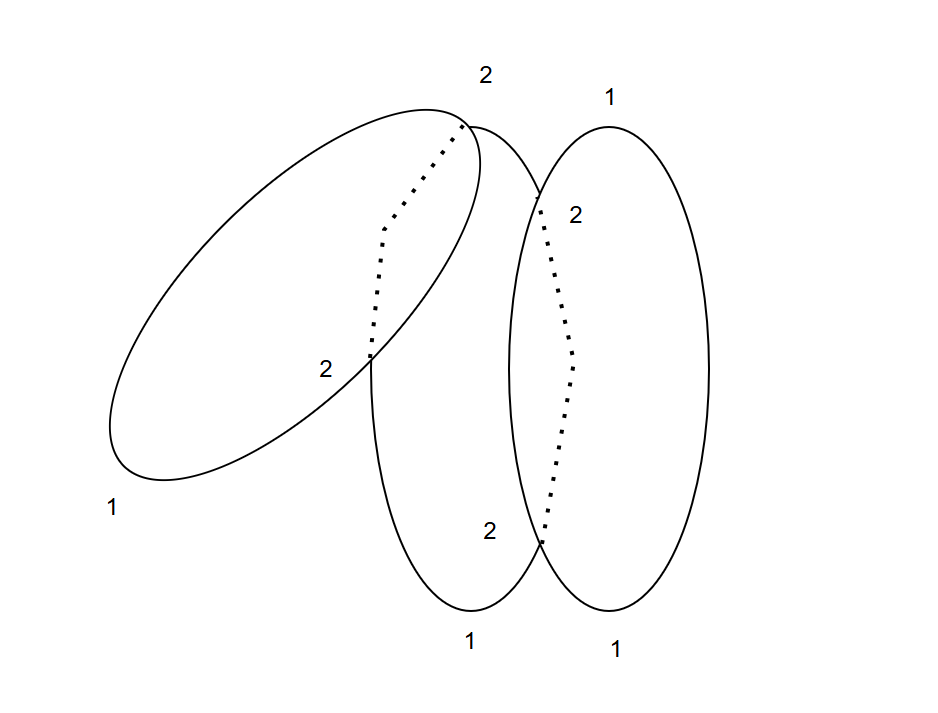}
\caption{Construction for branch data \( \{[2,1], [2,1], [2,1], [2,1]\} \).}
\label{fig:Proof2}
\end{figure}

We now proceed by induction. Assume the result holds for \( d = n - 1 \), and that in the corresponding rational map:
\begin{itemize}
    \item If all entries of \( \pi_k \) are greater than 1, then the vertex for \( b_{k,1} \) in \( \pi_k \) is connected to the vertex for \( b_{1q_1} \) in \( \pi_1 \) by a standard football.
    \item If \( \pi_k \) contains the entry 1, then the vertex for \( b_{1q_1} \) in \( \pi_1 \) is connected to the vertex for \( b_{2q_2} \) in \( \pi_2 \) by a standard football.
\end{itemize}

For \( d = n \), we construct a rational map \( f \) of degree \( n \) with branch data \( \{ \pi_1, \dots, \pi_k \} \) as follows.

\noindent \textbf{Case 1}: All entries of \( \pi_k \) are greater than 1.
Define
\[
\begin{aligned}
\hat{\pi}_1 &= \big[ b_{11}, \dots, b_{1q_1} - 1, \underbrace{1, \dots, 1}_{e_1} \big], \\
\hat{\pi}_i &= \big[ b_{i1}, \dots, b_{iq_i}, \underbrace{1, \dots, 1}_{e_i - 1} \big], \quad i = 2, \dots, k-1, \\
\hat{\pi}_k &= \big[ b_{k1}-1, \dots, b_{kq_k} \big].
\end{aligned}
\]

\noindent \textbf{Case 2}: \( \pi_k \) contains the entry 1.
Define
\[
\begin{aligned}
\hat{\pi}_1 &= \big[ b_{11}, \dots, b_{1q_1} - 1, \underbrace{1, \dots, 1}_{e_1} \big], \\
\hat{\pi}_2 &= \big[ b_{21}, \dots, b_{2q_2} - 1, \underbrace{1, \dots, 1}_{e_2} \big], \\
\hat{\pi}_i &= \big[ b_{i1}, \dots, b_{iq_i}, \underbrace{1, \dots, 1}_{e_i - 1} \big], \quad i = 3, \dots, k.
\end{aligned}
\]

In both cases, the partitions \( \hat{\pi}_1, \dots, \hat{\pi}_k \) are nontrivial partitions of \( n - 1 \) satisfying the Riemann--Hurwitz formula. By the induction hypothesis, there exists a rational map \( g \) with this branch datum such that:
\begin{itemize}
    \item In Case 1, the vertex for \( b_{k1} - 1 \) in \( \hat{\pi}_k \) is connected to the vertex for \( b_{1q_1} - 1 \) in \( \hat{\pi}_1 \) by a standard football.
    \item In Case 2, the vertex for \( b_{1q_1} - 1 \) in \( \hat{\pi}_1 \) is connected to the vertex for \( b_{2q_2} - 1 \) in \( \hat{\pi}_2 \) by a standard football.
\end{itemize}

Attaching an additional standard football along the corresponding geodesic segments in the domain of \( g \) yields the desired rational map \( f \) of degree \( n \), completing the induction step.

\section{Concluding remarks and open problems}

The geometric decomposition established in Theorem~\ref{Thm1} provides a new
perspective on rational maps and their branching structures. Several
directions for future investigation emerge naturally:

\begin{enumerate}
\item \textbf{Sharpness of the bound:} Theorem~\ref{Thm2} gives a sufficient
      condition $k > l + 1$ for realizability under a more condition. Is this bound also
      necessary under suitable hypotheses, i.e., whether Conjecture \ref{conj} holds in general? The non-realizable example
      in Theorem~1.3 suggests that the bound is sharp in certain cases,
      but a complete characterization remains open.

\item \textbf{Beyond rational maps:} The football decomposition may
      extend to branched coverings of higher-genus surfaces, potentially
      shedding new light on the Hurwitz existence problem for arbitrary
      sources.

\item \textbf{Moduli aspects:} The construction yields families of
      rational maps with prescribed branch datum. Understanding the moduli
      of such maps through the lens of football gluings remains an
      interesting open question.
\end{enumerate}
\textbf{Statements and Declarations}

\textbf{Data Availability Statement}  This manuscript has no associated data.

\textbf{Competing interests} On behalf of all authors, the corresponding author states that there is no conflict of interest.

\section*{Acknowledgments}
~~~~
%We would like to express our gratitude to the reviewers for their constructive comments and suggestions, which have helped to improve the quality and clarity of this manuscript.
We would like to thank Professor Xu Bin for valuable suggestions.

%%%%%%%%%%%%%%%%%%%%   End of main body of article
%
%                             References
%
%   BiBTeX users uncomment the following line:
%
%\bibliographystyle{gtart}
%

\end{document}